\DeclarePairedDelimiter\norm{\lvert}{\rvert}
\DeclarePairedDelimiter\inner{\langle}{\rangle}
\numberwithin{equation}{section}
\newcounter{intro}
		\newtheorem{introthm}[intro]{Theorem}
		\newtheorem{thm}[equation]{Theorem}
		\newtheorem{lem}[equation]{Lemma}
		\newtheorem{cor}[equation]{Corollary}
		\newtheorem{prop}[equation]{Proposition}
		\newtheorem{conj}[equation]{Conjecture}
\theoremstyle{remark}
		\newtheorem{rem}[equation]{Remark}
\theoremstyle{definition}
		\newtheorem{exam}[equation]{Example}
\newcommand{\irr}{\mathrm{Irr}}
\newcommand{\cl}{\mathrm{cl}}
\newcommand{\orb}{\mathrm{orb}}
\title[Supercharacter theories of $C_p\times C_p$]{Toward a Classification of \\the Supercharacter Theories of $C_p\times C_p$}
\author{Shawn T. Burkett}
\address{Department of Mathematical Sciences, Kent State University, Kent,
Ohio 44242, U.S.A.} \email{sburket1@kent.edu}
\author{Mark L. Lewis}
\address{Department of Mathematical Sciences, Kent State University, Kent,
Ohio 44242, U.S.A.} \email{lewis@math.kent.edu}
\date{\today}
\keywords{Supercharacter theories, elementary abelian p-groups, automorphisms}
\subjclass [2010]{Primary: 20C15, Secondary: 20D15}
\begin{document}
\maketitle

\begin{abstract}
In this paper, we study the superscharacter theories of elementary abelian $p$-groups of order $p^2$.  We show that the supercharacter theories that arise from the direct product construction and the $\ast$-product construction can be obtained from automorphisms.  We also prove that any supercharacter theory of an elementary abelian $p$-group of order $p^2$ that has a nonidentity superclass of size $1$ or a nonprincipal linear supercharacter must come from either a $\ast$-product or a direct product.  Although we are unable to prove results for general primes, we do compute all of the supercharacter theories when $p = 2, 3, 5$, and based on these computations along with particular computations for larger primes, we make several conjectures for a general prime $p$. 
\end{abstract}

\section{Introduction}

A supercharacter theory of a finite group is a somewhat condensed form of its character theory where the conjugacy classes are replaced by certain unions of conjugacy classes and the irreducible characters are replaced by certain pairwise orthogonal characters that are constant on the superclasses. In essence, a supercharacter theory is an approximation of the representation theory that preserves much of the duality exhibited by conjugacy classes and irreducible characters. Supercharacter theory has proven useful in a variety of situations where the full character theory is unable to be described in a useful combinatorial way.

The problem of classifying all supercharacter theories of a given finite group appears to be a difficult problem. For example, in \cite{BLLW17}, the authors saw no way other than to use a computer program to show that $\mathrm{Sp}_6(\mathbb{F}_2)$ has exactly two supercharacter theories. The problem of classifying all supercharacter theories of a {\it family} of finite groups seems likely to be much more difficult. At this time, we know of only a few families of groups for which this has been done; only one of which consists of nonabelian groups. 

In his Ph.D. thesis \cite{AH08}, Hendrickson classified the supercharacter theories of cyclic $p$-groups. It is then explained in the subsequent paper \cite{AH12} that the supercharacter theories of cyclic groups had already been classified by Leung and Man under the guise of {\it Schur rings} (see \cite{cyclicschur,LM98}). The supercharacter theories of the groups $C_2\times C_2\times C_p$ for $p$ a prime were classified in \cite{c2c2cp} (again via Schur rings). The supercharacter theories of the dihedral groups have been classified in a few different ways. Wynn classified the supercharacter theories of dihedral groups in his Ph.D. thesis \cite{CW17}. This is also accomplished by Lamar in his Ph.D. thesis \cite{lamar} (see also the preprint \cite{jonD2n}), where the properties of the lattice of supercharacter theories is also studied. It turns out that the supercharacter theories of the dihedral groups of order twice a prime were classified previously by Wai-Chee \cite{dihedralSchur}, where their Schur rings were studied (supercharacter theories correspond the the {\it central} Schur rings).
Finally we mention that Wynn and the second author classifed the supercharacter theories of Frobenius groups of order $pq$ in \cite{camina} and also reduced the problem of classifying the supercharacter theories of
Frobenius groups and semi-extraspecial groups to classifying the
supercharacter theories of various quotients and subgroups. 

Each of the above classifications involves certain {\it supercharacter theory products}, including $\ast$ and direct products, which will be described explicity in Section~\ref{prelim}. Each of the above classifications also involves supercharacter theories {\it coming from automorphisms} --- those constructed from the action of a group by automorphisms. The purpose of this paper is to study the supercharacter theories of the elementary abelian group $C_p\times C_p$, where $p$ is a prime. The first thing to notice is that a classification of the supercharacter theories of $C_p\times C_p$ would not need to include the above supercharacter theory products.

\begin{introthm}\label{thmA}
	Every supercharacter theory of $C_p\times C_p$ that can be realized as a $\ast$-product or direct product comes from automorphisms.
\end{introthm}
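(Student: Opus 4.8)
The plan is to treat the two product constructions separately and, in each case, to reduce everything to the classification of the supercharacter theories of the cyclic group $C_p$, every one of which comes from automorphisms by the Leung--Man/Hendrickson theorem (for $C_p$ these are exactly the orbit theories of the subgroups of $\mathrm{Aut}(C_p) \cong \mathbb{F}_p^\times$). Throughout I would identify $G = C_p \times C_p$ with the additive group $\mathbb{F}_p^2$, so that $\mathrm{Aut}(G) = \mathrm{GL}_2(\mathbb{F}_p)$, and I would use the standard fact (recorded in Section~\ref{prelim}) that a supercharacter theory is determined by its partition into superclasses. It therefore suffices, in each case, to exhibit a subgroup $A \leq \mathrm{GL}_2(\mathbb{F}_p)$ whose orbits on $G$ are exactly the superclasses of the given product.

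\textbf{Direct product case.} A direct product supercharacter theory arises from an internal decomposition $G = L_1 \oplus L_2$ into two complementary lines together with supercharacter theories $\mathcal{S}_1, \mathcal{S}_2$ of the factors $L_i \cong C_p$. Writing $\mathcal{S}_i$ as the orbit theory of a subgroup $H_i \leq \mathrm{Aut}(L_i) \cong \mathbb{F}_p^\times$, I would let $A = H_1 \times H_2$ act as the subgroup of $\mathrm{GL}_2(\mathbb{F}_p)$ that is diagonal with respect to a basis adapted to $L_1, L_2$. The $A$-orbits on $G$ are then precisely the products $K_1 \times K_2$ of an $H_1$-orbit with an $H_2$-orbit, which are exactly the superclasses of $\mathcal{S}_1 \times \mathcal{S}_2$; this settles the direct product case.

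\textbf{$\ast$-product case.} Here $N$ is one of the $p+1$ subgroups of order $p$, say $N = \mathbb{F}_p \times 0$, with $G/N \cong C_p$ read off the second coordinate (the $G$-invariance hypothesis for the $\ast$-product is automatic since $G$ is abelian). The factors are supercharacter theories $\mathcal{S}$ of $N$ and $\mathcal{T}$ of $G/N$, coming from subgroups $H_1, H_2 \leq \mathbb{F}_p^\times$ respectively, and the superclasses of $\mathcal{S} \ast \mathcal{T}$ are the $\mathcal{S}$-superclasses inside $N$ together with the full preimages in $G$ of the nonidentity $\mathcal{T}$-superclasses. I would take $A$ to be the group of upper-triangular matrices $\begin{pmatrix} a & b \\ 0 & d \end{pmatrix}$ with $a \in H_1$, $d \in H_2$, and $b \in \mathbb{F}_p$ arbitrary; a short check shows $A$ is a subgroup stabilizing $N$, acting on $N$ by multiplication by $a$ and on $G/N$ by multiplication by $d$. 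On $N \setminus 0$ the orbits are then the cosets of $H_1$, matching $\mathcal{S}$. For a point $(x,y)$ with $y \neq 0$, applying $A$ carries the second coordinate through the full $H_2$-orbit of $y$ while the term $by$ sweeps out all of $\mathbb{F}_p$, so the first coordinate becomes arbitrary; hence the orbit is precisely the preimage of the $H_2$-orbit of $y$, matching the pulled-back $\mathcal{T}$-superclasses.

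\textbf{Main obstacle.} The direct product step is routine; the crux is the $\ast$-product, and specifically the orbit computation off $N$. The delicate point is that the off-diagonal parameter $b$ must range over all of $\mathbb{F}_p$ both to keep $A$ closed under multiplication and to force each pulled-back $\mathcal{T}$-superclass to be a single $A$-orbit rather than a union of smaller ones; this is what dictates the ``Borel-type'' shape of $A$ and uses crucially that $y \neq 0$ outside $N$. I expect the only remaining care is the bookkeeping that the resulting orbit partition coincides, block by block, with the defining partition of the $\ast$-product recalled in Section~\ref{prelim}, after which the determination of a supercharacter theory by its superclasses completes the argument.
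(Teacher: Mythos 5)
Your proposal is correct and is essentially the paper's own argument in matrix form: the paper's Lemma~\ref{autoproducts} uses the diagonal subgroup $\langle\tilde{\sigma},\tilde{\tau}\rangle$ for the direct product and the group generated by $\tilde{\sigma}$ and the maps $\tau_k:(x^iy^j)\mapsto x^{i+jk}y^{jm_2}$ for the $\ast$-product, which is exactly your Borel-type subgroup with arbitrary off-diagonal entry. The orbit computations match block for block, so there is nothing to add.
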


Although we do not give a full classification, we will classify certain types of supercharacter theories. Specifically, we prove the following:

\begin{introthm}\label{thmB}Any supercharacter theory $\mathsf{S}$ of the elementary abelian group $C_p\times C_p$ that has a nonidentity superclass of size one or a nonprincipal linear supercharacter comes from a supercharacter theory ($\ast$ or direct) product. In particular, $\mathsf{S}$ comes from automorphisms. 
\end{introthm}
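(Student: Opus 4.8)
The plan is to treat the two hypotheses as dual to one another and reduce to a single case. Since $G = C_p\times C_p$ is abelian, a supercharacter theory $\mathsf{S}$ is determined by its partition of $G$ into superclasses together with the dual partition of $\widehat{G}=\irr(G)$; a linear supercharacter is exactly a singleton supercharacter block $\{\lambda\}$, and under the standard duality between $\mathsf{S}$ and its dual theory $\mathsf{S}^\ast$ on $\widehat{G}$ (itself $\cong C_p\times C_p$) a nonprincipal linear supercharacter of $\mathsf{S}$ is precisely a nonidentity singleton superclass of $\mathsf{S}^\ast$. Using the preliminary facts that the class of ($\ast$- or direct) products is carried into itself by duality, and that ``coming from automorphisms'' is self-dual via the contragredient identification $\mathrm{Aut}(G)\cong\mathrm{Aut}(\widehat{G})$, it suffices to prove the theorem assuming $\mathsf{S}$ has a nonidentity superclass of size one.

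So assume $\{g\}$ is a superclass with $g\neq 1$. First I would show that $H:=\{x\in G:\{x\}\text{ is a superclass}\}$ is a subgroup: working inside the commutative algebra spanned by the superclass sums $\widehat{K}=\sum_{x\in K}x$, the product of two singleton class sums is again a singleton class sum, and the theory is closed under inversion, so $H$ is closed under multiplication and inverses. As $|H|$ divides $p^2$ and $H\neq 1$, either $H=G$ --- in which case $\mathsf{S}$ is the discrete theory, trivially a direct product coming from the trivial automorphism group --- or $|H|=p$. Assume the latter. Next I would show that $H$ permutes the superclasses by left translation: for a superclass $K$ and $h\in H$, the element $h\widehat{K}=\widehat{hK}$ lies in the algebra, so $hK$ is a union of superclasses; applying $h^{-1}$ and using that $K$ is a single superclass forces $hK$ to be a single superclass. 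Hence each $H$-orbit of superclasses outside $H$ has size $1$ or $p$.

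The key organizing device is the quotient $\pi\colon G\to G/H\cong C_p$. I would check that the images $\{\pi(K)\}$ of the superclasses form a supercharacter theory of $C_p$ (namely the image of the superclass algebra under $\pi$); the only subtlety is that distinct superclasses have equal or disjoint images, which follows from the orbit analysis, an $H$-fixed superclass being a union of cosets while a free orbit consists of transversals each meeting every coset of its support exactly once. Since $C_p$ has only the discrete and the coarsest theories, there are two cases. If the quotient theory is discrete, every outside superclass is a single coset of $H$, so $\mathsf{S}$ is the $\ast$-product of the discrete theory on $H$ with the discrete theory on $G/H$. If the quotient theory is coarsest, then either $G\setminus H$ is a single superclass --- giving the $\ast$-product with the coarsest theory on $G/H$ --- or the elements outside $H$ form exactly one free $H$-orbit of $p$ superclasses, each of size $p-1$ and meeting every nontrivial coset once.

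This last free-orbit case is where I expect the real work. Here I would fix one such superclass $K_0$ and aim to produce a complement $H'$ with $G=H\times H'$ and $\mathsf{S}=\mathsf{S}_H\times\mathsf{S}_{H'}$ (discrete on $H$, coarsest on $H'$); equivalently, I must show the transversal $K_0$ is a subgroup minus the identity. The hard part is that an arbitrary transversal need not give a valid theory: one must extract from closure of the superclass algebra under multiplication and inversion that the ``heights'' of $K_0$ over the cosets of $H$ depend linearly on the coset, which is exactly the condition that $K_0\cup\{1\}$ be a subgroup. Expanding $\widehat{K_0}^{\,2}$ and matching its component in each coset against the transversal superclasses $h^iK_0$ should force this linearity; once $H'$ is identified, recognizing $\mathsf{S}$ as the direct product is routine. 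In every case $\mathsf{S}$ is a $\ast$- or direct product, and Theorem~\ref{thmA} then yields that $\mathsf{S}$ comes from automorphisms.
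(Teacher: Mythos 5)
Your duality reduction and your identification of $H=Z(\mathsf{S})$ as a subgroup whose elements translate superclasses are both sound and match facts the paper records in its preliminaries; your overall organization (Schur-ring style, on the superclass side, organized by the quotient theory on $G/Z(\mathsf{S})$) is genuinely different from the paper's. But there are two real gaps. First, the case analysis rests on the claim that ``$C_p$ has only the discrete and the coarsest theories.'' This is false for $p\ge 5$: the quotient $G/H\cong C_p$ has $\tau(p-1)$ supercharacter theories, one for each subgroup of $\mathrm{Aut}(C_p)\cong C_{p-1}$ (the paper uses exactly this count when enumerating $\ast$-products). So your dichotomy omits every intermediate quotient theory, and these occur: $\mathsf{m}(\langle x\rangle)\times[\langle y\rangle]_{-1}$ on $C_5\times C_5$ has $Z(\mathsf{S})=\langle x\rangle$ and quotient classes $\{\bar y,\bar y^{4}\},\{\bar y^{2},\bar y^{3}\}$, neither discrete nor coarsest. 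In these cases the outside superclasses are transversals of unions of $d$ cosets for a proper divisor $d$ of $p-1$, and the possible mixing of coset-union classes with transversal classes, and the identification of a complement, all have to be handled anew.

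Second, the step you yourself flag as the real work is asserted rather than proved. Writing $K_0=\{y^jx^{f(j)}:j\ne 0\}$, the constraint extracted from $\widehat{K_0}^{\,2}$ is that the multiset $\{f(j)+f(k-j)-f(k): j\ne 0,k\}$ is independent of $k$, and you say this ``should force'' $f$ to be affine; that implication is precisely the hard combinatorial content and is nowhere established. (Note also that the correct target is that \emph{some translate} $x^iK_0$, not $K_0$ itself, is a subgroup minus the identity: for $p=3$ the valid theory with classes $\{1\},\{x\},\{x^2\},\{y,y^2x\},\{yx,y^2x^2\},\{yx^2,y^2\}$ has $K_0\cup\{1\}=\{1,y,y^2x\}$ not a subgroup, while $xK_0\cup\{1\}=\langle yx\rangle$ is.) The paper avoids this combinatorics entirely: it works with $[G,\mathsf{S}]$ and $Z(\mathsf{S})$ together, using the degree bound $\chi(1)\le\norm{G:Z(\mathsf{S})}$ and its vanishing criterion, the divisibility $\chi(1)/\psi(1)\divides\norm{G:N}$ from Lemma~\ref{actiondivide}, and a count of basic characters feeding into Corollary~\ref{direct}, with the one delicate case dispatched by showing $[G^\ast,\check{\mathsf{S}}]$ would be proper. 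To complete your route you would need to prove the affineness lemma (and its analogue for transversals of $d$-coset unions), or replace that step with the paper's character-degree argument.
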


In Section \ref{partition}, we show that any partition of the nontrivial, proper subgroups of $C_p\times C_p$ gives rise to a supercharacter theory and that these {\it partition supercharacter theories} play a special role in the full lattice of supercharacter theories. In Section \ref{S normal}, we make a strong conjecture regarding the structure of certain types of supercharacter theories that has been supported through computational evidence. Although we do not provide a full classification, we believe this paper to be a good starting point for anyone who desires to do so. We mention that we are able to provide a full classification for the prime $p=2,3$ and $5$. Although we have not been able to complete a classification for $p=7$, all evidence thus far points towards a similar classification as for the smaller primes.

\section{Preliminaries}\label{prelim}

Diaconis and Issacs \cite{ID07} define a {\it supercharacter theory} to be a pair $(\mathcal{X},\mathcal{K})$, where $\mathcal{X}$ is a partition of $\irr(G)$ and $\mathcal{K}$ is a partition of $G$ satisfying the following three conditions:
\begin{itemize}
	\item $\norm{\mathcal{X}}=\mathcal{K}$;
	\item $\{1\}\in\mathcal{K}$;
	\item For every $X\in\mathcal{X}$, there is a character $\xi_X$ whose constituents lie in $X$ that is constant on the parts of $\mathcal{K}$. 
\end{itemize}
	For each $X\in\mathcal{X}$, the character $\xi_X$ is a constant multiple of the character $\sigma_X=\sum_{\psi\in X}\psi(1)\psi$. We let $\mathrm{BCh}(\mathsf{S})=\{\sigma_X:X\in\mathcal{X}\}$ and call its elements the {\it basic} $\mathsf{S}$-{\it characters}.
If $\mathsf{S}=(\mathcal{X},\mathcal{K})$ is a supercharacter theory, we write $\mathcal{K}=\mathrm{Cl}(\mathsf{S})$ and call its elements $\mathsf{S}$-{\it classes}.  The principal character of $G$ is always a basic $\mathsf{S}$-character.  When there is no ambiguity, we may refer to $\mathsf{S}$-classes and basic $\mathsf{S}$-characters as {\it superclasses} and {\it supercharacters}. We will frequently make use of the fact that $\mathsf{S}$-classes and basic $\mathsf{S}$-characters uniquely determine each other \cite[Theorem 2.2 (c)]{ID07}. 

The set $\mathrm{SCT}(G)$ of all supercharacter theories comes equipped with a partial order. Hendrickson \cite{AH12} shows that for any two supercharacter theories $\mathsf{S}$ and $\mathsf{T}$, every $\mathsf{T}$-class is a union of $\mathsf{S}$-classes if and only if every basic $\mathsf{T}$-character is a sum of basic $\mathsf{S}$-characters. In this event, we write $\mathsf{S}\preccurlyeq\mathsf{T}$. We say that $\mathsf{S}$ is finer than $\mathsf{T}$ or that $\mathsf{T}$ is coarser than $\mathsf{S}$. Since $\mathrm{SCT}(G)$ has a partial order and a maximal (and minimal) element, it is actually a lattice. The join operation $\vee$ on $\mathrm{SCT}(G)$ is very well-behaved and is inherited from the join operation on the set of partitions of $G$ under the refinement order, which we will also denote by $\vee$. If $\mathsf{S}$ and $\mathsf{T}$ are supercharacter theories of $G$, then the superclasses of $\mathsf{S}\vee\mathsf{T}$ is just the mutual coarsening of the partitions $\mathrm{Cl}(\mathsf{S})$ and $\mathrm{Cl}(\mathsf{T})$; i.e. $\mathrm{Cl}(\mathsf{S})\vee\mathrm{Cl}(\mathsf{T})$. However, the meet operation $\wedge$ on $\mathrm{SCT}(G)$ is poorly behaved and difficult to compute. In particular, the equation $\mathrm{Cl}(\mathsf{S}\wedge\mathsf{T})=\mathrm{Cl}(\mathsf{S})\wedge\mathrm{Cl}(\mathsf{T})$ holds only sporadically. One example where this equality does hold will be discussed later in this section (see Lemma~\ref{direct prod}).

Every finite group has two {\it trivial} supercharacter theories. The first, which we denote by $\mathsf{m}(G)$, is the supercharacter theory with superclasses the usual conjugacy classes of $G$. The supercharacters of $\mathsf{m}(G)$ are exactly the irreducible characters of $G$ (multiplied by their degrees). This is the finest supercharacter theory of $G$ under the partial order discussed in the previous paragraph (i.e. $\mathsf{m}(G)\preccurlyeq\mathsf{S}$ for every supercharacter theory $\mathsf{S}$ of $G$). There is also a coarsest supercharacter theory of $G$ for the partial ordering of the previous paragraph, denoted by $\mathsf{M}(G)$ (i.e. $\mathsf{S}\preccurlyeq\mathsf{M}(G)$ for every supercharacter theory $\mathsf{S}$ of $G$). The $\mathsf{M}(G)$-classes are just $\{1\}$ and $G\setminus\{1\}$ and the basic $\mathsf{M}(G)$-characters are $\mathbbm{1}$ and $\rho_G-\mathbbm{1}$, where $\mathbbm{1}$ is the principal character and $\rho_G$ is the regular character of $G$.

Supercharacter theories can arise in many different (often mysterious) ways. One of the more well-known ways comes from actions by automorphisms. If $A\le\mathrm{Aut}(G)$, then $A$ acts on $\irr(G)$ via $\chi^a(g)=\chi(g^{a^{-1}})$ for $a\in A$, $\chi\in\irr(G)$ and $g\in G$. Then Brauer's Permutation Lemma (see \cite[Theorem 6.32]{MI76}, for example) can be used to show that the orbits of $G$ and $\irr(G)$ under the action of $A$ yield a supercharacter theory. In this case, we say that $\mathsf{S}$ {\it comes from $A$} or {\it comes from automorphisms}. An important aspect of the Leung--Man classification \cite{cyclicschur,LM98} (or Hendrickson's \cite{AH08}) is that every supercharacter theory of a cyclic group of prime order comes from automorphisms. This fact will be used extensively later without reference. 

Just as every normal subgroup is determined by the conjugacy classes of $G$ and by the irreducible characters, there is a distinguished set of normal subgroups determined by $\mathsf{S}$. Any subgroup $N$ that is a union of $\mathsf{S}$-classes is called $\mathsf{S}$-normal. In this situation, we write $N\lhd_{\mathsf{S}}G$. It is not difficult to show that $N$ is the intersection of the kernels of those $\chi\in\mathrm{BCh}(\mathsf{S})$ that satisfy $N\le\ker(\chi)$. In fact this is another way to classify $\mathsf{S}$-normal subgroups \cite{EM11}.

Whenever $N$ is $\mathsf{S}$-normal, Hendrickson \cite{AH12} showed that $\mathsf{S}$ gives rise to a supercharacter theory $\mathsf{S}_N$ of $N$ and $\mathsf{S}^{G/N}$ of $G/N$. The $\mathsf{S}_N$-classes are just the $\mathsf{S}$-classes contained in $N$ and the basic $\mathsf{S}_N$-characters are the restrictions of the basic $\mathsf{S}$-characters, up to a constant. Moreover, it is a result of the first author (see \cite[Theorem 1.1.2]{SB18} or \cite[Theorem A]{SB18nil}) that if $\chi\in\mathrm{BCh}(\mathsf{S})$ and $\psi$ is the basic-$\mathsf{S_N}$-character lying under $\chi$, then $\chi(1)/\psi(1)$ is an integer. The $\mathsf{S}^{G/N}$-classes are the images of the $\mathsf{S}$-classes under the canonical projection $G\to G/N$ and the basic $\mathsf{S}^{G/N}$-characters can be identified with the basic $\mathsf{S}$-characters with $N$ contained in their kernel. These constructions are compatible in the sense that $(\mathsf{S}_N)^{N/M}=(\mathsf{S}^{G/M})_{N/M}$. As such, we simply write $\mathsf{S}_{N/M}$ in this situation. In \cite{SBJH18}, the first author shows that these constructions respect the lattice structure of the set of $\mathsf{S}$-normal subgroups. In particular, if $H$ and $N$ are any $\mathsf{S}$-normal subgroups, then the images of the superclasses of $\mathsf{S}_{H/(H\cap N)}$ under the canoncial isomorphism $H/(H\cap N)\to HN/N$ are exactly the $\mathsf{S}_{HN/N}$-classes \cite[Theorem A]{SBJH18}.

Hendrickson also used these constructions to define supercharacter theories of the full group. Given any supercharacter theory $\mathsf{U}$ of a normal subgroup $N$ of $G$ whose superclasses are fixed (set-wise) under the conjugation action of $G$ and a supercharacter theory $\mathsf{V}$ of $G/N$, Hendrickson defines the $\ast$-{\it product} $\mathsf{U}\ast\mathsf{V}$ as follows. The supercharacters of $\mathsf{U}\ast\mathsf{V}$ that have $N$ in their kernel can be naturaly identified with the supercharacters in $\mathrm{BCh}(\mathsf{V})$ and those that do not have $N$ in their kernel are just induced from nonprincipal members of $\mathrm{BCh}(\mathsf{U})$. The superclasses of $\mathsf{U}\ast\mathsf{V}$ contained in $N$ are the superclasses of $\mathsf{U}$, and the superclasses of $\mathsf{U}\ast\mathsf{V}$ lying outside of $N$ are the full preimages of the nonidentity superclasses of $\mathsf{V}$ under the canonical projection $G\to G/N$. If $\mathsf{S}$ is a supercharacter theory of $G$ and $N\lhd _{\mathsf{S}}G$, then $\mathsf{S}\preccurlyeq\mathsf{S}_N\ast\mathsf{S}_{G/N}$, with equality if and only if every $\mathsf{S}$-class lying outside of $N$ is a union of $N$-cosets. 

Another characterization that appears in \cite{SBMLsctProducts} is the following. Let $N$ be $\mathsf{S}$-normal. Then $\mathsf{S}$ is a $\ast$-product over $N$ if and only if every $\chi\in\mathrm{BCh}(\mathsf{S})$ satisfying $N\nleq\ker(\chi)$ vanishes on $G\setminus N$. One direction of this result follows easily from the next lemma about basic $\mathsf{S}$-characters vanishing off $\mathsf{S}$-normal subgroups.

\begin{lem}\label{induce}
	Let $\mathsf{S}$ be a supercharacter theory of $G$ and let $\chi\in\mathrm{BCh}(\mathsf{S})$. Assume that $\chi$ vanishes on $G\setminus N$, where $N$ is $\mathsf{S}$-normal. Then $\chi=\psi^G$ for some basic $\mathsf{S}_N$-character $\psi$. 
\end{lem}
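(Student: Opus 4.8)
The plan is to exploit the fact that a character vanishing off a normal subgroup is, up to a scalar, induced from it, and then to recognize the inducing character as a basic $\mathsf{S}_N$-character. First I would record that $N\lhd_{\mathsf{S}}G$ forces $N\trianglelefteq G$ in the ordinary sense: every $\mathsf{S}$-class is a union of ordinary conjugacy classes, so the subgroup $N$, being a union of $\mathsf{S}$-classes, is a union of conjugacy classes and hence normal. With normality in hand I would compute $(\chi_N)^G$ directly from the induced-character formula. For $g\notin N$ the inducing sum is empty, so $(\chi_N)^G(g)=0=\chi(g)$ by hypothesis; for $g\in N$, every conjugate $x^{-1}gx$ again lies in $N$ and $\chi_N(x^{-1}gx)=\chi(g)$ because $\chi$ is a class function, giving $(\chi_N)^G(g)=[G:N]\chi(g)$. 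Thus $(\chi_N)^G=[G:N]\chi$, and the whole lemma reduces to showing that $\tfrac{1}{[G:N]}\chi_N$ is a basic $\mathsf{S}_N$-character.

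To identify this restriction I would invoke the structural description of $\mathsf{S}_N$ recalled above, namely that the restriction to $N$ of a basic $\mathsf{S}$-character is a constant multiple of a single basic $\mathsf{S}_N$-character. Write $\chi_N=c\,\psi_0$, where $\psi_0=\sigma_Y$ is the basic $\mathsf{S}_N$-character lying under $\chi$ and $Y$ is the corresponding part of $\irr(N)$. Then $(\chi_N)^G=[G:N]\chi$ becomes $c\,\psi_0^G=[G:N]\chi$, so $\psi_0^G=\tfrac{[G:N]}{c}\,\chi$. Everything now hinges on showing that the scalar $\tfrac{[G:N]}{c}$ equals $1$; once this is known, $\chi=\psi_0^G$ with $\psi_0\in\mathrm{BCh}(\mathsf{S}_N)$, as required.

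I expect the determination of this constant to be the main obstacle, and the vanishing hypothesis is exactly what should force it. My plan is to pin the scalar down by comparing multiplicities of a single irreducible constituent. Fix $\theta\in X$, so that $\theta$ occurs in $\chi=\sigma_X$ with coefficient $\theta(1)$. Since $\chi_N=c\,\sigma_Y$, every irreducible constituent of $\theta_N$ already appears in $\chi_N$ and hence lies in $Y$; therefore, by Frobenius reciprocity,
\[
\langle\psi_0^G,\theta\rangle=\langle\psi_0,\theta_N\rangle=\sum_{\lambda\in Y}\lambda(1)\,\langle\theta_N,\lambda\rangle=\theta_N(1)=\theta(1).
\]
On the other hand, the coefficient of $\theta$ in $\tfrac{[G:N]}{c}\chi$ is $\tfrac{[G:N]}{c}\theta(1)$. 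Equating the two multiplicities yields $\tfrac{[G:N]}{c}=1$, that is $c=[G:N]$, and hence $\chi=\psi_0^G$. This is consistent with the integrality statement recalled above, which guarantees $c=\chi(1)/\psi_0(1)\in\mathbb{Z}$; the force of the vanishing hypothesis is precisely that it makes this integer equal to $[G:N]$.
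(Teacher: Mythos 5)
Your proof is correct, but it follows a genuinely different route from the paper's. The paper starts from the coarsening $\mathsf{S}\preccurlyeq\mathsf{S}_N\ast\mathsf{S}_{G/N}$, which guarantees that $\psi^G$ is a sum of \emph{distinct} basic $\mathsf{S}$-characters, and then computes $\langle\psi^G,\xi\rangle$ against an arbitrary basic $\mathsf{S}$-character $\xi$, using the vanishing of $\chi$ off $N$ to convert the sum over $N$ into a sum over $G$; this shows every constituent of $\psi^G$ equals $\chi$, forcing $\psi^G=\chi$. You instead bypass the $\ast$-product machinery entirely: the classical identity $(\chi_N)^G=[G:N]\chi$ for a class function vanishing off a normal subgroup, combined with $\chi_N=c\,\psi_0$, reduces everything to evaluating one scalar, which you pin down by computing the multiplicity of a single irreducible constituent $\theta\in X$ via ordinary Frobenius reciprocity. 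The one step worth making fully explicit is why $\langle\psi_0,\theta_N\rangle=\theta(1)$: since $\chi_N=\sum_{\theta'\in X}\theta'(1)\theta'_N=c\,\sigma_Y$ with all coefficients nonnegative, no cancellation can occur, so every constituent of $\theta_N$ lies in $Y$ and the weighted sum telescopes to $\theta_N(1)$ --- you assert this correctly but tersely. Your argument uses a strictly smaller set of supercharacter-theoretic inputs (only the fact that $\chi_N$ is a constant multiple of a single basic $\mathsf{S}_N$-character, which the paper also invokes) and is more elementary; the paper's computation has the side benefit of exhibiting the orthogonality $\langle\psi^G,\xi\rangle=\norm{G:N}\psi(1)\delta_{\chi,\xi}$ directly, which sits closer to the vanishing-off characterization of $\ast$-products used later in the paper. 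Both proofs yield the same byproduct $\chi(1)=\norm{G:N}\psi(1)$.
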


\begin{proof}
	Since $\mathsf{S}\preccurlyeq\mathsf{S}_N\ast\mathsf{S}_{G/N}$, $\psi^G$ is a sum of distinct basic $\mathsf{S}$-characters. Let $\xi$ be one such basic $\mathsf{S}$-character, and note that $\chi_N=\frac{\chi(1)}{\psi(1)}\psi$. Then
	\begin{align*}
		\inner{\psi^G,\xi}
		&=\frac{1}{\norm{G}}\sum_{g\in G}\psi^G(g)\overline{\xi(g)}=\frac{1}{\norm{G}}\sum_{g\in N}\psi^G(g)\overline{\xi(g)}\\
		&=\frac{1}{\norm{N}}\sum_{g\in N}\psi(g)\overline{\xi(g)}=\frac{\psi(1)}{\norm{N}\chi(1)}\sum_{g\in N}\chi_N(g)\overline{\xi(g)}\\
		&=\frac{\psi(1)}{\norm{N}\chi(1)}\sum_{g\in G}\chi(g)\overline{\xi(g)}=\frac{\norm{G:N}}{\chi(1)/\psi(1)}\inner{\chi,\xi}=\norm{G:N}\psi(1)\delta_{\chi,\xi}.
	\end{align*}
	The result easily follows.
\end{proof}

Also defined in \cite{AH12} is the {\it direct product} of supercharacter theories. Given a supercharacter theory $\mathsf{E}$ of a group $H$ and a supercharacter theory $\mathsf{F}$ of a group $K$, the supercharacter theory $\mathsf{E}\times\mathsf{F}$ of $H\times K$. This supercharacter theory is defined by $\mathrm{Cl}(\mathsf{E}\times\mathsf{F})=\{K\times L:K\in\mathrm{Cl}(\mathsf{E}),\ L\in\mathrm{Cl}(\mathsf{F})\}$ and $\mathrm{BCh}(\mathsf{S})=\{\chi\times\xi:\chi\in\mathrm{BCh}(\mathsf{E}),\ \xi\in\mathrm{BCh}(\mathsf{S})\}$. The direct product supercharacter theory is intimately related to the $\ast$-product, as this next result illustrates.

\begin{lem}\label{direct prod}
Let $G=H\times N$, let $\mathsf{S}\in\mathrm{SCT}(H)$, and let $\mathsf{T}\in\mathrm{SCT}(N)$. Let $\varphi_1:H\to G/N$ and $\varphi_2:N\to G/H$ be the projections. Let $\tilde{\mathsf{S}}=\varphi_1(\mathsf{S})\in\mathrm{SCT}(G/N)$, and let $\tilde{\mathsf{T}}=\varphi_2(\mathsf{T})\in\mathrm{SCT}(G/H)$. Write $\mathsf{U}=\mathsf{S}\ast\tilde{T}$ and $\mathsf{V}=\mathsf{T}\ast\tilde{S}$. Then $\mathrm{Cl}(\mathsf{S}\times\mathsf{T})=\mathrm{Cl}(\mathsf{U})\wedge\mathrm{Cl}(\mathsf{V})$. In particular, $\mathsf{S}\times\mathsf{T}$ is equal to $\mathsf{U}\wedge\mathsf{V}$.
\end{lem}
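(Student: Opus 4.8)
The plan is to reduce the statement to a concrete computation with the two superclass partitions, after which the equality of supercharacter theories will follow formally from the lattice structure on $\mathrm{SCT}(G)$. First I would record exactly what the superclasses of $\mathsf{U}=\mathsf{S}\ast\tilde{\mathsf{T}}$ and $\mathsf{V}=\mathsf{T}\ast\tilde{\mathsf{S}}$ look like inside $G=H\times N$, reading them off from the definition of the $\ast$-product. Since $\mathsf{U}$ is a $\ast$-product over $H$, its superclasses contained in $H$ are the $\mathsf{S}$-classes, which I identify with the sets $K\times\{1\}$ for $K\in\mathrm{Cl}(\mathsf{S})$, while its superclasses lying outside $H$ are the full preimages in $G$ of the nonidentity $\tilde{\mathsf{T}}$-classes, which under $\varphi_2$ are exactly the sets $H\times L$ for $L\in\mathrm{Cl}(\mathsf{T})$ with $L\neq\{1\}$. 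Symmetrically, the $\mathsf{V}$-classes are the sets $\{1\}\times L$ for $L\in\mathrm{Cl}(\mathsf{T})$ together with the sets $K\times N$ for $K\in\mathrm{Cl}(\mathsf{S})$ with $K\neq\{1\}$. I would first note that both $\ast$-products are even defined: in $H\times N$ the conjugation action of $G$ on $H$ induces only inner automorphisms of $H$, so the $\mathsf{S}$-classes are $G$-invariant, and likewise for the $\mathsf{T}$-classes.

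Next I would compute the partition meet $\mathrm{Cl}(\mathsf{U})\wedge\mathrm{Cl}(\mathsf{V})$, i.e.\ the partition of $G$ whose parts are the nonempty intersections of a $\mathsf{U}$-class with a $\mathsf{V}$-class. Running through the four types of such intersections and keeping careful track of the distinguished identity classes $\{1\}\in\mathrm{Cl}(\mathsf{S})$ and $\{1\}\in\mathrm{Cl}(\mathsf{T})$, the nonempty intersections turn out to be precisely $\{(1,1)\}$, the sets $K\times\{1\}$ with $K\neq\{1\}$, the sets $\{1\}\times L$ with $L\neq\{1\}$, and the sets $K\times L$ with $K\neq\{1\}\neq L$. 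This is exactly the list $\{K\times L:K\in\mathrm{Cl}(\mathsf{S}),\ L\in\mathrm{Cl}(\mathsf{T})\}=\mathrm{Cl}(\mathsf{S}\times\mathsf{T})$, which gives the first assertion. The only place demanding care here is the bookkeeping around the trivial coordinates; away from them the intersection $(H\times L)\cap(K\times N)=K\times L$ is immediate.

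Finally, for the ``in particular'' clause I would argue purely at the level of the lattice $\mathrm{SCT}(G)$, using the characterization that $\mathsf{A}\preccurlyeq\mathsf{B}$ iff $\mathrm{Cl}(\mathsf{A})$ refines $\mathrm{Cl}(\mathsf{B})$. Because the partition meet refines each of $\mathrm{Cl}(\mathsf{U})$ and $\mathrm{Cl}(\mathsf{V})$, the first assertion gives $\mathsf{S}\times\mathsf{T}\preccurlyeq\mathsf{U}$ and $\mathsf{S}\times\mathsf{T}\preccurlyeq\mathsf{V}$, so $\mathsf{S}\times\mathsf{T}$ is a lower bound of $\{\mathsf{U},\mathsf{V}\}$. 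Conversely, if $\mathsf{W}$ is any supercharacter theory with $\mathsf{W}\preccurlyeq\mathsf{U}$ and $\mathsf{W}\preccurlyeq\mathsf{V}$, then $\mathrm{Cl}(\mathsf{W})$ refines both $\mathrm{Cl}(\mathsf{U})$ and $\mathrm{Cl}(\mathsf{V})$, hence refines their coarsest common refinement $\mathrm{Cl}(\mathsf{U})\wedge\mathrm{Cl}(\mathsf{V})=\mathrm{Cl}(\mathsf{S}\times\mathsf{T})$; thus $\mathsf{W}\preccurlyeq\mathsf{S}\times\mathsf{T}$. Therefore $\mathsf{S}\times\mathsf{T}$ is the greatest lower bound, i.e.\ $\mathsf{S}\times\mathsf{T}=\mathsf{U}\wedge\mathsf{V}$.

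The step I expect to be the real content — and the reason this is worth recording as a lemma — is the first assertion, not the second: the general warning in the text is that $\mathrm{Cl}(\mathsf{U}\wedge\mathsf{V})$ need not equal $\mathrm{Cl}(\mathsf{U})\wedge\mathrm{Cl}(\mathsf{V})$, so the lattice-theoretic deduction in the last paragraph only works because the partition meet happens here to be realized by an honest supercharacter theory, namely $\mathsf{S}\times\mathsf{T}$. Once that realization is established, no further analysis of basic characters is needed. The main (and modest) obstacle is therefore purely combinatorial: correctly enumerating the intersections and handling the identity classes without double-counting or missing the mixed blocks $K\times L$.
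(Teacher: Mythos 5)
Your proof is correct and follows essentially the same route as the paper: write out the two $\ast$-product superclass partitions, check that their mutual refinement is exactly $\{K\times L:K\in\mathrm{Cl}(\mathsf{S}),\,L\in\mathrm{Cl}(\mathsf{T})\}$, and then use the fact that this partition is realized by the honest supercharacter theory $\mathsf{S}\times\mathsf{T}$ to conclude it equals $\mathrm{Cl}(\mathsf{U}\wedge\mathsf{V})$. (Incidentally, your description $\mathrm{Cl}(\mathsf{U})=\{K\times\{1\}\}\cup\{H\times L:L\neq\{1\}\}$ is the one that matches the stated definition of the $\ast$-product; the paper's displayed formula lists $K\times(N\setminus\{1\})$ instead, but both partitions have the same mutual refinement with $\mathrm{Cl}(\mathsf{V})$, so the argument is unaffected.)
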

\begin{proof}
We have
\[\mathrm{Cl}(\mathsf{U})=\bigcup_{K\in\mathrm{Cl}(\mathsf{S})}\{K\times\{1\},K\times(N\setminus\{1\})\}\]
and
\[\mathrm{Cl}(\mathsf{V})=\bigcup_{L\in\mathrm{Cl}(\mathsf{T})}\{\{1\}\times L,(H\setminus\{1\})\times L\}.\]
The mutual refinement of these partitions is exactly 
\[\mathcal{K}=\{K\times L:K\in\mathrm{Cl}(\mathsf{S}),\ L\in\mathrm{Cl}(\mathsf{T})\}.\]
Since $\mathcal{K}$ is the set of superclasses of a supercharacter theory of $G$, and $\mathcal{K}$ is the coarsest partition of $G$ finer than both $\mathrm{Cl}(\mathsf{U})$ and $\mathrm{Cl}(\mathsf{T})$, it follows that $\mathcal{K}=\mathrm{Cl}(\mathsf{U}\wedge\mathsf{V})$, which means $\mathsf{U}\wedge\mathsf{V}=\mathsf{S}\times\mathsf{T}$.
\end{proof}

Recall that $\mathsf{S}\preccurlyeq\mathsf{S}_N\ast\mathsf{S}_{G/N}$ whenever $N$ is an $\mathsf{S}$-normal subgroup of $G$. Thus, as an immediate corollary of Lemma~\ref{direct prod}, we deduce the following.

\begin{cor}\label{direct}
Let $G=H\times N$ and suppose $\mathsf{S}$ is a supercharacter theory of $G$ in which both $H$ and $N$ are $\mathsf{S}$-normal. Then $\mathsf{S}\preccurlyeq\mathsf{S}_H\times\mathsf{S}_N$, with equality if and only if $\norm{\mathsf{S}}=\norm{\mathsf{S}_H}\cdot\norm{\mathsf{S}_N}$.
\end{cor}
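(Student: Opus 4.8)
The plan is to apply Lemma~\ref{direct prod} to the restricted supercharacter theories $\mathsf{S}_H$ and $\mathsf{S}_N$ and then compare the two $\ast$-products it produces with the $\ast$-product bounds guaranteed by $\mathsf{S}$-normality. Concretely, I would set $\mathsf{E}=\mathsf{S}_H\in\mathrm{SCT}(H)$ and $\mathsf{F}=\mathsf{S}_N\in\mathrm{SCT}(N)$ and feed these into Lemma~\ref{direct prod}, which yields $\mathsf{S}_H\times\mathsf{S}_N=\mathsf{U}\wedge\mathsf{V}$ with $\mathsf{U}=\mathsf{S}_H\ast\varphi_2(\mathsf{S}_N)$ and $\mathsf{V}=\mathsf{S}_N\ast\varphi_1(\mathsf{S}_H)$. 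The first task is to identify the pushforwards $\varphi_1(\mathsf{S}_H)$ and $\varphi_2(\mathsf{S}_N)$ with the quotient supercharacter theories $\mathsf{S}_{G/N}$ and $\mathsf{S}_{G/H}$. Since $G=H\times N$ we have $H\cap N=1$ and $HN=G$, so \cite[Theorem A]{SBJH18} (recalled above) applies with $H\cap N=1$: here $\mathsf{S}_{H/(H\cap N)}=\mathsf{S}_H$ and the canonical isomorphism $H\to HN/N$ is precisely $\varphi_1$, giving $\varphi_1(\mathsf{S}_H)=\mathsf{S}_{G/N}$. Swapping the roles of $H$ and $N$ gives $\varphi_2(\mathsf{S}_N)=\mathsf{S}_{G/H}$. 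Thus $\mathsf{U}=\mathsf{S}_H\ast\mathsf{S}_{G/H}$ and $\mathsf{V}=\mathsf{S}_N\ast\mathsf{S}_{G/N}$.

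With this identification the refinement is immediate. Because $H$ is $\mathsf{S}$-normal, the recalled $\ast$-product bound gives $\mathsf{S}\preccurlyeq\mathsf{S}_H\ast\mathsf{S}_{G/H}=\mathsf{U}$, and because $N$ is $\mathsf{S}$-normal, $\mathsf{S}\preccurlyeq\mathsf{S}_N\ast\mathsf{S}_{G/N}=\mathsf{V}$. Hence $\mathsf{S}$ is a lower bound for $\{\mathsf{U},\mathsf{V}\}$ in the lattice $\mathrm{SCT}(G)$, so $\mathsf{S}\preccurlyeq\mathsf{U}\wedge\mathsf{V}=\mathsf{S}_H\times\mathsf{S}_N$, which is the first assertion.

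For the equality criterion I would argue by counting superclasses. By the definition of the direct product, $\mathrm{Cl}(\mathsf{S}_H\times\mathsf{S}_N)=\{K\times L:K\in\mathrm{Cl}(\mathsf{S}_H),\ L\in\mathrm{Cl}(\mathsf{S}_N)\}$, so $\norm{\mathsf{S}_H\times\mathsf{S}_N}=\norm{\mathsf{S}_H}\cdot\norm{\mathsf{S}_N}$. Since $\mathsf{S}\preccurlyeq\mathsf{S}_H\times\mathsf{S}_N$, the partition $\mathrm{Cl}(\mathsf{S})$ refines $\mathrm{Cl}(\mathsf{S}_H\times\mathsf{S}_N)$, so $\norm{\mathsf{S}}\ge\norm{\mathsf{S}_H\times\mathsf{S}_N}$, and a refinement with the same number of blocks must be the identity refinement. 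As a supercharacter theory is determined by its superclasses, it follows that $\mathsf{S}=\mathsf{S}_H\times\mathsf{S}_N$ exactly when $\norm{\mathsf{S}}=\norm{\mathsf{S}_H}\cdot\norm{\mathsf{S}_N}$.

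The only genuinely nontrivial step is the identification $\varphi_1(\mathsf{S}_H)=\mathsf{S}_{G/N}$ and $\varphi_2(\mathsf{S}_N)=\mathsf{S}_{G/H}$, where the compatibility result \cite[Theorem A]{SBJH18} does the real work of matching the two $\ast$-products furnished by Lemma~\ref{direct prod} with the two $\ast$-products bounded by $\mathsf{S}$. Once those pushforwards are recognized as the quotient theories, the remainder is formal manipulation in $\mathrm{SCT}(G)$ together with the elementary counting argument, so the statement is indeed immediate from Lemma~\ref{direct prod}.
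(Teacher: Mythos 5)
Your proof is correct and follows essentially the same route as the paper's: both invoke Lemma~\ref{direct prod} with $\mathsf{E}=\mathsf{S}_H$, $\mathsf{F}=\mathsf{S}_N$, identify the pushforwards with $\mathsf{S}_{G/N}$ and $\mathsf{S}_{G/H}$ via \cite[Theorem A]{SBJH18}, take the meet of the two $\ast$-product upper bounds, and settle equality by counting superclasses. No gaps.
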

\begin{proof}
Using the notation in the statement of previous result, we have $\widetilde{\mathsf{S}_H}=\mathsf{S}_{G/N}$ and $\widetilde{\mathsf{S}_N}=\mathsf{S}_{G/H}$ \cite[Theorem A]{SBJH18}. Since 
\[\mathsf{S}\preccurlyeq\mathsf{S}_H\ast\mathsf{S}_{G/H}=\mathsf{S}_H\ast\widetilde{\mathsf{S}_N}\]
and
\[\mathsf{S}\preccurlyeq\mathsf{S}_N\ast\mathsf{S}_{G/N}=\mathsf{S}_N\ast\widetilde{\mathsf{S}_H},\]
we have
\[\mathsf{S}\preccurlyeq\bigl(\mathsf{S}_H\ast\widetilde{\mathsf{S}_N}\bigr)\wedge\bigl(\mathsf{S}_N\ast\widetilde{\mathsf{S}_H}\bigr)=\mathsf{S}_H\times\mathsf{S}_N.\]
Since $\mathsf{S}\preccurlyeq\mathsf{S}_H\times\mathsf{S}_N$ and $\norm{\mathsf{S}_H\times\mathsf{S}_N}=\norm{\mathsf{S}_H}\cdot\norm{\mathsf{S}_N}$, the result follows.
\end{proof}

We mention one more construction we will need, also due to Hendrickson. If $G$ is an abelian group, then $\irr(G)$ forms a group under the pointwise product. There is a natural isomorphism $G\to\irr(\irr(G))$ sending $g\in G$ to $\tilde{g}\in\irr(\irr(G))$ defined by $\tilde{g}(\chi)=\chi(g)$. If $\mathsf{S}$ is a supercharacter theory of $G$, then $\check{\mathsf{S}}$ is a supercharacter theory of $\irr(G)$, where $\mathrm{Cl}(\check{\mathsf{S}})=\mathrm{BCh}(\mathsf{S})$ and $\mathrm{BCh}(\check{\mathsf{S}})=\{\{\tilde{g}:g\in K\}:K\in\mathrm{Cl}(\mathsf{S})\}$ \cite[Theorem 5.3]{AH08}. This {\it duality} construction will be used to simplify some arguments in the proof of Theorem~\ref{thmB}.


\section{Central elements and commutators}

Let $\mathsf{S}$ be a supercharacter theory of $G$. In \cite{SB18nil}, the first author discusses two important subgroups of $G$ associated to $\mathsf{S}$. The first of these subgroups is an analog of the center of a group and consists of the superclasses of size one. We denote this subgroup by $Z(\mathsf{S})$. The fact that $Z(\mathsf{S})$ is a ($\mathsf{S}$-normal) subgroup follows easily from \cite[Corollary 2.3]{ID07} and a proof appears in \cite{AH08}. Another consequence of \cite[Corollary 2.3]{ID07} appearing in \cite{AH08} is that $\cl_{\mathsf{S}}(g)z=\cl_{\mathsf{S}}(gz)$ for any $z\in Z(\mathsf{S})$. Using this fact, as well as a consequence of \cite[Theorem A]{SB18nil}, we prove the following lemma that will be used in the proof of Theorem~\ref{thmB}.

\begin{lem}\label{class}
Let $\mathsf{S}$ be a supercharacter theory of $G$ and write $Z=Z(\mathsf{S})$. If $gz\notin\cl_{\mathsf{S}}(g)$ for any $z\in Z$, then $\norm{\cl_{\mathsf{S}_{G/Z}}(gZ)}=\norm{\cl_{\mathsf{S}}(g)}$.
\end{lem}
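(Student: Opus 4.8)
The plan is to identify the superclass $\cl_{\mathsf{S}_{G/Z}}(gZ)$ with the image of $\cl_{\mathsf{S}}(g)$ under the canonical projection $\pi\colon G\to G/Z$, and then to show that $\pi$ is injective on $\cl_{\mathsf{S}}(g)$. Since $Z$ is $\mathsf{S}$-normal, Hendrickson's construction (as recalled earlier) tells us that the $\mathsf{S}_{G/Z}$-classes are exactly the images of the $\mathsf{S}$-classes under $\pi$, so $\cl_{\mathsf{S}_{G/Z}}(gZ)=\pi\bigl(\cl_{\mathsf{S}}(g)\bigr)$. Consequently $\norm{\cl_{\mathsf{S}_{G/Z}}(gZ)}=\norm{\cl_{\mathsf{S}}(g)}$ holds precisely when no two distinct elements of $\cl_{\mathsf{S}}(g)$ lie in the same coset of $Z$, i.e. when $\pi$ restricts to an injection on $\cl_{\mathsf{S}}(g)$.

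First I would take $h,h'\in\cl_{\mathsf{S}}(g)$ with $hZ=h'Z$ and write $h'=hz$ for some $z\in Z$; the goal is then to force $z=1$. The main tool is the translation identity $\cl_{\mathsf{S}}(x)z=\cl_{\mathsf{S}}(xz)$, valid for every $z\in Z$, which was recorded earlier in the excerpt. Applying it with $x=h$ gives $\cl_{\mathsf{S}}(h')=\cl_{\mathsf{S}}(hz)=\cl_{\mathsf{S}}(h)z$. Since $h$ and $h'$ both lie in $\cl_{\mathsf{S}}(g)$, membership in a superclass is equivalent to equality of superclasses, so $\cl_{\mathsf{S}}(h)=\cl_{\mathsf{S}}(h')=\cl_{\mathsf{S}}(g)$, and the displayed identity becomes $\cl_{\mathsf{S}}(g)=\cl_{\mathsf{S}}(g)z$.

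This last equation says that right translation by $z$ stabilizes the superclass of $g$; in particular $gz\in\cl_{\mathsf{S}}(g)z=\cl_{\mathsf{S}}(g)$. But the hypothesis is exactly that $gz\notin\cl_{\mathsf{S}}(g)$ for every nonidentity $z\in Z$, so we must have $z=1$ and hence $h=h'$. This establishes that $\pi$ is injective on $\cl_{\mathsf{S}}(g)$, and combined with the first paragraph it yields the desired equality of cardinalities.

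I expect no serious obstacle here: the result is essentially an immediate consequence of the translation identity $\cl_{\mathsf{S}}(xz)=\cl_{\mathsf{S}}(x)z$ together with the description of superclasses in the quotient. The only points requiring care are the correct reading of the hypothesis (that it concerns \emph{nonidentity} $z$) and the repeated passage between ``$h\in\cl_{\mathsf{S}}(g)$'' and ``$\cl_{\mathsf{S}}(h)=\cl_{\mathsf{S}}(g)$'', which is what lets the translation identity be transported from $\cl_{\mathsf{S}}(h)$ to $\cl_{\mathsf{S}}(g)$.
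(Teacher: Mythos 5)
Your proof is correct and follows essentially the same route as the paper's: both reduce the claim to injectivity of the projection $G\to G/Z$ on $\cl_{\mathsf{S}}(g)$, using the translation identity $\cl_{\mathsf{S}}(xz)=\cl_{\mathsf{S}}(x)z$. If anything, yours is slightly more self-contained, since the paper only verifies that the fiber over $gZ$ is trivial and then appeals to the divisibility of $\norm{\cl_{\mathsf{S}}(g)}$ by $\norm{\cl_{\mathsf{S}_{G/Z}}(gZ)}$ to finish, whereas you check injectivity on an arbitrary pair $h$, $h'=hz$ directly.
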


\begin{proof}
Let $h\in\cl_{\mathsf{S}}(g)$. Then $h=gg^{-1}h\in\cl_{\mathsf{S}}(g)$, so $g^{-1}h\notin Z$. So the map $\cl_{\mathsf{S}}(g)\to\cl_{\mathsf{S}_{G/Z}}(gZ)$ is injective. Since $\norm{\cl_{\mathsf{S}_{G/Z}}(gZ)}$ divides $\norm{\cl_{\mathsf{S}}(g)}$, the result follows.
\end{proof}

It turns out that many analogs of classical results about the center of the group exist for $Z(\mathsf{S})$ (see \cite{SB18nil} for more details). Among these is the next result, which is a generalization of a well-known fact about ordinary complex characters (e.g., see \cite[Corollary 2.30]{MI76}).

\begin{lem}\label{centerequality}
	Let $\chi$ be a basic $\mathsf{S}$-character of $G$ and write $Z=Z(\mathsf{S})$. Then $\chi(1)\le\norm{G:Z(\mathsf{S})}$, with equality if and only if $\chi$ vanishes on $G\setminus Z(\mathsf{S})$.
\end{lem}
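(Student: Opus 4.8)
The plan is to mimic the classical proof of the analogous statement for ordinary characters (Isaacs, Corollary 2.30), adapting it to the supercharacter setting by using the special structure of $Z(\mathsf{S})$. The key observation is that $Z = Z(\mathsf{S})$ consists exactly of the elements whose $\mathsf{S}$-classes are singletons, and by the stated consequence of \cite[Corollary 2.3]{ID07}, $\cl_{\mathsf{S}}(g)z = \cl_{\mathsf{S}}(gz)$ for every $z \in Z$. This means that a basic $\mathsf{S}$-character $\chi$, being constant on $\mathsf{S}$-classes, transforms nicely under multiplication by central elements: for each $z \in Z$ the value $\chi(gz)$ depends on $g$ in a controlled way. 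In fact, since $Z(\mathsf{S})$ behaves like a genuine center, I expect that for $z \in Z$ one has $\chi(gz) = \lambda_\chi(z)\chi(g)$ for some linear character $\lambda_\chi$ of $Z$, exactly as in the ordinary theory; this is the supercharacter analog of a character being a multiple of a linear character on the center.

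First I would establish this multiplicativity. Since $\{z\}$ is an $\mathsf{S}$-class for $z \in Z$, and the basic $\mathsf{S}$-characters are orthogonal and constant on $\mathsf{S}$-classes, the restriction $\chi_Z$ is a basic $\mathsf{S}_Z$-character up to scalar; because every $\mathsf{S}_Z$-class is a singleton, $\mathsf{S}_Z$ is the finest supercharacter theory $\mathsf{m}(Z)$, so $\chi_Z$ is a constant multiple of an irreducible (linear) character $\lambda$ of the abelian group $Z$. Writing $\chi_Z = \chi(1)\lambda$ and combining with $\cl_{\mathsf{S}}(gz) = \cl_{\mathsf{S}}(g)z$, I would deduce $\norm{\chi(gz)} = \norm{\chi(g)}$ for all $z \in Z$, and more precisely $\chi(gz) = \lambda(z)\chi(g)$.

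With this in hand, the inequality follows from a standard second-orthogonality / column-sum estimate. Applying the orthogonality relation $\sum_{g \in G}\norm{\chi(g)}^2 = \norm{G}\chi(1)$ (valid since each basic $\mathsf{S}$-character is an orthogonal projection–type sum of irreducibles, hence has this norm) and using that $\norm{\chi(g)}$ is constant on each coset $gZ$, I would group the sum over cosets of $Z$:
\[
\norm{G}\,\chi(1) = \sum_{g \in G}\norm{\chi(g)}^2 = \norm{Z}\sum_{gZ \in G/Z}\norm{\chi(g)}^2 \ge \norm{Z}\cdot\chi(1)^2,
\]
where the last step keeps only the coset of the identity, on which $\norm{\chi(1)}^2 = \chi(1)^2$. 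This yields $\chi(1) \le \norm{G:Z}$, with equality precisely when every other coset contributes zero, i.e. when $\chi$ vanishes on $G \setminus Z$.

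The main obstacle I anticipate is justifying the two facts I am treating as black boxes: that $\norm{\chi(g)}$ is genuinely constant on cosets $gZ$ (rather than merely $\chi(gz) = \lambda(z)\chi(g)$ up to a phase, which is enough for the modulus but must be argued carefully from the $\mathsf{S}$-class structure), and that the norm identity $\sum_{g}\norm{\chi(g)}^2 = \norm{G}\chi(1)$ holds for basic $\mathsf{S}$-characters. The latter should follow from $\chi = \sigma_X = \sum_{\psi \in X}\psi(1)\psi$ together with ordinary orthogonality, giving $\inner{\chi,\chi} = \sum_{\psi \in X}\psi(1)^2 = \chi(1)$; I would verify that this is exactly the needed normalization. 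The equality case is where I would be most careful: I must confirm that equality in the coset estimate forces $\chi(g) = 0$ for all $g \notin Z$, and conversely that vanishing off $Z$ returns $\chi(1) = \norm{G:Z}$, which the same computation delivers immediately.
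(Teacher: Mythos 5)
Your proof is correct and is essentially the paper's argument: both rest on the observation that $\mathsf{S}_{Z(\mathsf{S})}$ is the finest theory, so $\chi_Z=\chi(1)\lambda$ for a linear $\lambda$, and then compare $\inner{\chi_Z,\chi_Z}=\chi(1)^2$ with $\inner{\chi,\chi}=\chi(1)$ via the trivial estimate $\sum_{z\in Z}\norm{\chi(z)}^2\le\sum_{g\in G}\norm{\chi(g)}^2$, with equality exactly when $\chi$ vanishes off $Z$. Your intermediate step establishing $\chi(gz)=\lambda(z)\chi(g)$ (hence constancy of $\norm{\chi}$ on $Z$-cosets) is true but not needed, since the inequality only requires keeping the identity coset and discarding the nonnegative remainder.
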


\begin{proof}
	 Since $\mathsf{S}_{Z(\mathsf{S})}$ is the finest supercharacter theory, the restriction $\chi_{Z(\mathsf{S})}$ is a multiple of some linear character $\lambda$. So $\inner{\chi_{Z(\mathsf{S})},\chi_{Z(\mathsf{S})}}=\chi(1)^2$. On the other hand, $\inner{\chi_{Z(\mathsf{S})},\chi_{Z(\mathsf{S})}}\le\norm{G:Z(\mathsf{S})}\inner{\chi,\chi}=\norm{G:Z(\mathsf{S})}\chi(1)$, with equality if and only if $\chi$ vanishes on $G\setminus Z(\mathsf{S})$. The result follows.
\end{proof}

We now discuss an analog of the commutator subgroup of $G$. Note that one may write $[G,G]=\inner{g^{-1}k:k\in\cl_G(g)}$. Using this description, it is natural to consider the subgroup $\inner{g^{-1}k:k\in\cl_{\mathsf{S}}(g)}$, which we denote by $[G,\mathsf{S}]$. It turns out this subgroup is always $\mathsf{S}$-normal \cite[Proposition 3.7]{SB18nil}. Moreover, $[G,\mathsf{S}]$ provides information of the structure of the basic $\mathsf{S}$-characters. Mosty notably, a basic $\mathsf{S}$-character $\chi$ is linear if and only if $[G,\mathsf{S}]\le\ker(\chi)$ \cite[Proposition 3.11]{SB18nil}. 

As stated above, if $\mathsf{S}$ is a supercharacter theory of $G$, $N$ is $\mathsf{S}$-normal, $\chi$ is a basic $\mathsf{S}$-character and $\psi$ is a basic $\mathsf{S}_N$-character satisfying $\inner{\chi_N,\psi}>0$, then $\psi(1)$ divides $\chi(1)$. The next result, which is \cite[Proposition 3.13]{SB18nil}, shows this can be strengthened in certain situations, a fact that will be useful later.

\begin{lem}\label{actiondivide}
Let $N$ be an $\mathsf{S}$-normal subgroup satisfying $[G,\mathsf{S}]\le N$. Let $\chi$ be a basic $\mathsf{S}$-character that does not contain $N$ in its kernel, and suppose that $\psi\in\mathrm{BCh}(\mathsf{S}_N)$ satisfies $\inner{\chi_N,\psi}>0$. Then $\chi(1)/\psi(1)$ divides $\norm{G:N}$. 
\end{lem}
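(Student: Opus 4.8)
## Proof Proposal for Lemma~\ref{actiondivide}

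The plan is to exploit the hypothesis $[G,\mathsf{S}]\le N$ to show that $\chi$ must vanish off $N$, and then apply the induction machinery already developed. The key observation is that when $[G,\mathsf{S}]\le N$, the quotient $\mathsf{S}_{G/N}$ has trivial commutator subgroup $[G/N,\mathsf{S}_{G/N}]$, which forces all of its basic characters to be linear. I would first translate the divisibility claim into a statement about induced characters: since $\psi\in\mathrm{BCh}(\mathsf{S}_N)$ satisfies $\inner{\chi_N,\psi}>0$, I want to understand how $\chi$ sits over $\psi$ and show the ratio $\chi(1)/\psi(1)$ is controlled by the structure of $G/N$.

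First I would argue that $\chi$ vanishes on $G\setminus N$. The idea is that because $N\nleq\ker(\chi)$, the character $\chi$ is ``genuinely nonlinear relative to $N$'' in a way that, combined with $[G,\mathsf{S}]\le N$, should force vanishing. More precisely, I would consider the quotient supercharacter theory $\mathsf{S}_{G/N}$ and use that every basic $\mathsf{S}_{G/N}$-character is linear (by the characterization $[G/N,\mathsf{S}_{G/N}]\le\ker$ being automatic, via \cite[Proposition 3.11]{SB18nil}). The basic $\mathsf{S}$-characters with $N$ in their kernel are exactly the inflations of these linear characters. Since $\chi$ does \emph{not} have $N$ in its kernel, $\chi$ lies outside this set, and I would leverage orthogonality against all the linear (inflated) basic characters to conclude that $\chi$ cannot be supported on cosets of $N$ in a way compatible with nonvanishing — pushing toward the conclusion that $\chi$ vanishes on $G\setminus N$.

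Once vanishing off $N$ is established, the rest is routine: by Lemma~\ref{induce}, $\chi=\psi^G$ for some basic $\mathsf{S}_N$-character $\psi$, and then $\chi(1)=\norm{G:N}\psi(1)$, so $\chi(1)/\psi(1)=\norm{G:N}$, which certainly divides $\norm{G:N}$. I should double-check that the $\psi$ produced by Lemma~\ref{induce} is the same one (or degree-compatible with the one) appearing in the hypothesis $\inner{\chi_N,\psi}>0$; since $\mathsf{S}_N$ is the finest supercharacter theory when $N=Z(\mathsf{S})$ but here $N$ is general, I would verify via Frobenius reciprocity that $\inner{\chi_N,\psi}=\inner{\chi,\psi^G}$ pins down the relevant constituent.

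The main obstacle is establishing the vanishing of $\chi$ on $G\setminus N$. The cleanest route may actually avoid claiming full vanishing and instead argue the divisibility directly through the induction-degree compatibility: we know $\psi(1)\mid\chi(1)$ in general, and the refinement $\mathsf{S}\preccurlyeq\mathsf{S}_N\ast\mathsf{S}_{G/N}$ (which holds since $N$ is $\mathsf{S}$-normal) means $\psi^G$ decomposes into basic $\mathsf{S}$-characters; using that $[G,\mathsf{S}]\le N$ makes $G/N$ behave like an abelian-style quotient where induced characters from $N$ genuinely induce irreducibly at the supercharacter level. The delicate point is ensuring the integer $\chi(1)/\psi(1)$ does not merely divide some multiple but precisely divides $\norm{G:N}$; I expect this to follow from counting the distinct basic $\mathsf{S}$-characters appearing in $\psi^G$ together with the degree formula $\psi^G(1)=\norm{G:N}\psi(1)$, forcing $\chi(1)/\psi(1)$ to be a divisor of $\norm{G:N}$.
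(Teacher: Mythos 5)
There is a genuine gap, and in fact your primary route rests on a false claim. You cannot show that $\chi$ vanishes on $G\setminus N$, because it need not: take $\mathsf{S}=\mathsf{m}(G)$ for $G$ abelian, so that $[G,\mathsf{S}]=1\le N$ for every $N$, and let $\chi$ be a linear character with $N\nleq\ker(\chi)$; then $\chi$ vanishes nowhere, and the correct value of $\chi(1)/\psi(1)$ is $1$, a \emph{proper} divisor of $\norm{G:N}$. Indeed, if the vanishing always held, then by the characterization from \cite{SBMLsctProducts} every such $\mathsf{S}$ would be a $\ast$-product over $N$, which is exactly what the paper's Theorem~\ref{main actual} shows is \emph{not} always the case (the direct-product branch of that proof is precisely the case $\chi(1)=\psi(1)$). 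Your orthogonality argument against the inflated linear characters only yields that the coset averages $\sum_{n\in N}\chi(gn)$ vanish, not that $\chi$ vanishes pointwise, so it cannot rescue the claim.

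Your fallback --- counting the distinct basic constituents of $\psi^G$ against the degree formula $\psi^G(1)=\norm{G:N}\psi(1)$ --- is where the paper's proof actually lives, but as stated it does not close: knowing only that $\norm{G:N}\psi(1)=\sum_i\chi_i(1)$ with each $\chi_i(1)/\psi(1)$ an integer does not force any individual ratio to divide $\norm{G:N}$ (compare $6=4+1+1$). The missing idea is a group action. Because $[G,\mathsf{S}]\le N$, the basic $\mathsf{S}$-characters with $N$ in their kernel are all linear and form a group $\Lambda$ of order $\norm{G:N}$ acting on $\mathrm{BCh}(\mathsf{S})$ by multiplication; the constituents of $\psi^G$ are exactly $\bigcup_{\lambda\in\Lambda}\irr(\chi)^\lambda$, so the distinct basic constituents form a single $\Lambda$-orbit, all of degree $\chi(1)$. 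Then $\norm{G:N}\psi(1)=\norm{\mathrm{orb}_\Lambda(\chi)}\chi(1)$, whence $\chi(1)/\psi(1)=\norm{\mathrm{Stab}_\Lambda(\chi)}$, which divides $\norm{\Lambda}=\norm{G:N}$ by orbit--stabilizer. That transitivity-plus-equal-degrees step is the content of the lemma and is absent from your proposal.
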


\begin{proof}
Since $[G,\mathsf{S}]\le N$, $\Lambda=\mathrm{Ch}(\mathsf{S}/N)$ acts on $\mathrm{BCh}(\mathsf{S})$ in the obvious way. Consider the set $C=\{\psi\lambda:\ \psi\in X,\ \lambda\in\Lambda\}$, where $X=\irr(\chi)$. On the one hand, $C$ is exactly the set of constituents of $\psi^G$. By \cite[Lemma 3.4]{AH08}, we conclude that
\[\sigma_C(1)=\psi^G(1)=\norm{G:N}\psi(1).\]
On the other hand, we have $C=\bigcup_{\lambda\in \Lambda}X^\lambda$. Since $\irr(\chi^\lambda)\cap \irr(\chi)=\varnothing$ whenever $\chi^\lambda\neq \chi$ and $\chi^\lambda(1)=\chi(1)$ for each $\lambda\in\Lambda$, we have
\[\sigma_C(1)=\norm{\mathrm{orb}_{\Lambda}(\chi)}\chi(1).\]
Thus, we have
\[\chi(1)=\frac{\norm{G:N}\psi(1)}{\norm{\mathrm{orb}_{\Lambda}(\chi)}}=\norm{\mathrm{Stab}_{\Lambda}(\chi)}\psi(1).\]
The result follows as $\norm{\mathrm{Stab}_{\Lambda}(\chi)}$ divides $\norm{G:N}$.
\end{proof}

\begin{rem}
	If $\chi\in\irr(G)$ and $\psi\in\irr(N)$ lies under $\chi$, then $\chi(1)/\psi(1)$ is known to divide $\norm{G:N}$ (for a proof, see \cite[Corollary 11.29]{MI76}). In the case $\mathsf{S}=\mathsf{m}(G)$, Lemma~\ref{actiondivide} is saying something a little stronger. In this case, a basic $\mathsf{S}$-character has the form $\chi(1)\chi$ for some $\chi\in\irr(G)$. If $\psi\in\irr(N)$ lies under $\chi$, then a basic $\mathsf{S}_N$ character has degree $\norm{G:I_G(\psi)}$, where $I_G(\psi)$ is the inertia group of $\psi$ in $G$. Therefore, in this case Lemma~\ref{actiondivide} is saying that if $G/N$ is abelian, $(\chi(1)/\psi(1))^2$ divides $\norm{G:N}\norm{G:I_G(\psi)}$.
\end{rem}

\section{Proofs}

In this section, we prove the main results of the paper. For the remainder of the paper, $p$ is an odd prime and $G$ is the abelian group of order $p^2$ and exponent $p$. 

Our first result shows that every $\ast$-product and direct product supercharacter theory of $G$ comes from automorphisms. Note that this includes Theorem~\ref{thmA}.

\begin{lem}\label{autoproducts}
	Let $N\ne M$ be a nontrivial, proper subgroups of $G$. Let $\varphi:M\to G/N$ be the canonical isomorphism $x\mapsto xN$. Let $\mathsf{U}$ be a supercharacter theory of $N$, and let $\mathsf{V}$ be a supercharacter theory of $M$.  The following hold:
	\begin{enumerate}[label={\bf(\arabic*)}]
		\item There exists $A\le \mathrm{Aut}(G)$ such that $\mathsf{U}\ast\varphi(\mathsf{V})$ comes from $A$;
		\item There exists $B\le \mathrm{Aut}(G)$ such that $\mathsf{U}\times\mathsf{V}$ comes from $B$.
	\end{enumerate}
\end{lem}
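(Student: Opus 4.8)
The plan is to realize $G$ as the two-dimensional $\mathbb{F}_p$-vector space, so that $\mathrm{Aut}(G)=\mathrm{GL}_2(\mathbb{F}_p)$, and then to write down explicit matrix subgroups whose orbits on $G$ coincide with the superclasses of the two product supercharacter theories. Since $N\ne M$ are distinct subgroups of order $p$, they meet trivially and $G=N\times M$; I would fix a basis $e_1,e_2$ with $M=\langle e_1\rangle$ and $N=\langle e_2\rangle$, writing elements as pairs $(s,t)=se_1+te_2$. Because $N$ and $M$ are cyclic of prime order, the cited fact that every supercharacter theory of $C_p$ comes from automorphisms supplies subgroups $A_N\le\mathrm{Aut}(N)$ and $A_M\le\mathrm{Aut}(M)$, each a subgroup of $\mathbb{F}_p^\times$ acting by scalar multiplication, whose orbits are the $\mathsf{U}$-classes and $\mathsf{V}$-classes respectively. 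Throughout I will use that the orbits of a subgroup of $\mathrm{Aut}(G)$ on $G$ are the superclasses of a supercharacter theory (via Brauer's Permutation Lemma, as in Section~\ref{prelim}) and that a supercharacter theory is determined by its superclasses \cite{ID07}; thus it suffices in each part to match orbits with superclasses.

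For (1), I would take
\[ A=\left\{\begin{pmatrix}\beta & 0\\ \gamma & \alpha\end{pmatrix}:\alpha\in A_N,\ \beta\in A_M,\ \gamma\in\mathbb{F}_p\right\}, \]
which acts by $(s,t)\mapsto(\beta s,\ \gamma s+\alpha t)$. A direct multiplication check shows $A$ is a subgroup of $\mathrm{GL}_2(\mathbb{F}_p)$: the diagonal entries multiply inside $A_N$ and $A_M$, while the lower-left entry is unconstrained. The vanishing upper-right entry guarantees every element of $A$ stabilizes $N=\langle e_2\rangle$, as the $\ast$-product requires. On $N$ the action is $(0,t)\mapsto(0,\alpha t)$, so the orbits inside $N$ are exactly the $\mathsf{U}$-classes. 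For $(s,t)$ with $s\ne 0$, fixing $\beta$ and letting $\gamma$ range over $\mathbb{F}_p$ makes $\gamma s+\alpha t$ sweep out all of $\mathbb{F}_p$, so the orbit contains the full coset $\beta s\,e_1+N$; as $\beta$ ranges over $A_M$ the orbit becomes $\bigcup_{\beta\in A_M}(\beta s\,e_1+N)$, which is precisely the preimage in $G$ of the $\varphi(\mathsf{V})$-class of $sN$. These are exactly the superclasses of $\mathsf{U}\ast\varphi(\mathsf{V})$ described in Section~\ref{prelim}, so that product comes from $A$.

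For (2), I would instead take the diagonal subgroup
\[ B=\left\{\begin{pmatrix}\beta & 0\\ 0 & \alpha\end{pmatrix}:\alpha\in A_N,\ \beta\in A_M\right\}\cong A_M\times A_N, \]
which preserves both $N$ and $M$ and acts by $(s,t)\mapsto(\beta s,\alpha t)$. Its orbit on $(s,t)$ is $\orb_{A_M}(s)\times\orb_{A_N}(t)$, a $\mathsf{V}$-class in $M$ times a $\mathsf{U}$-class in $N$; these are exactly the superclasses $\{K\times L\}$ of the direct product $\mathsf{U}\times\mathsf{V}$. Hence $\mathsf{U}\times\mathsf{V}$ comes from $B$. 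As a sanity check, $B\le A$ forces the $A$-orbits to coarsen the $B$-orbits, recovering the expected relation $\mathsf{U}\times\mathsf{V}\preccurlyeq\mathsf{U}\ast\varphi(\mathsf{V})$ at the level of automorphism groups.

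The routine parts are the closure of $A$ under multiplication and the orbit bookkeeping. The one step requiring genuine care is the claim that, outside $N$, each $A$-orbit is a full union of $N$-cosets: this is exactly where the hypothesis $s\ne 0$ enters, since it makes $\gamma\mapsto\gamma s$ surjective onto $\mathbb{F}_p$, and it is what forces the orbits outside $N$ to be preimages of $\varphi(\mathsf{V})$-classes rather than something strictly finer. I expect this to be the only point where one must argue rather than merely compute.
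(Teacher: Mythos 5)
Your proof is correct and follows essentially the same route as the paper: both arguments extend the power-map automorphisms of $N$ and $M$ to $G$ and, for the $\ast$-product, adjoin the shearing automorphisms (your arbitrary $\gamma$, the paper's $\tau_k$) so that orbits outside $N$ sweep out full $N$-cosets. The only difference is presentational — you write the automorphism group down at once as a triangular matrix group in $\mathrm{GL}_2(\mathbb{F}_p)$, whereas the paper specifies generators and computes the orbits of the group they generate.
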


\begin{proof}
	Write $N=\inner{x}$ and $M=\inner{y}$. Since $N$ and $M$ are cyclic of prime order, there exist integers $m_1$, $m_2$ such that $\mathsf{U}$ comes from the automorphism $\sigma:N\to N$ defined by $x^\sigma=x^{m_1}$ and $\mathsf{U}$ comes from the automorphism $\tau:M\to M$ defined by $y^\tau=y^{m_2}$. 
	
	First we prove (1). Let $\mathsf{S}=\mathsf{U}\ast\varphi(\mathsf{V})$. Then the $\mathsf{S}$-classes contained in $N$ are the orbits of $\inner{\sigma}$ on $N$. The $\mathsf{S}$-classes lying outside of $N$ are the full preimages of the orbits of $\inner{\tau}$ on $M$ under the projection $G\to G/N$. Extend $\sigma$ to an automorphism $\tilde{\sigma}$ of $G$ by setting $y^{\tilde{\sigma}}=y$. For each $1\le k\le p-1$, define the automorphism $\tau_k$ of $G$ by defining $(x^iy^j)^{\tau_k}=x^{i+jk}y^{jm_2}$. Let $A=\inner{\tilde{\sigma},\tau_1,\tau_2,\dotsc,\tau_{p-1}}$. Then $N$ is $A$-invariant and the orbits of $A$ on $N$ are exactly the orbits of $\inner{\sigma}$ on $N$. Thus $\orb_A(g)=\cl_{\mathsf{S}}(g)$ if $g\in N$. Observe that $\{1,y,\dotsc,y^{p-1}\}$ is a transversal for $N$ in $G$. For each $1\le j,k\le p-1$, observe that $(y^j)^{\tau_k}=y^{jm_2}x^{jk}$. As $k$ ranges over the set $\{1,2,\dotsc,p-1\}$, so does $jk$. It follows that $\orb_A(y^j)=\{hn:h\in\orb_{\inner{\tau}}(y^j), n\in N\}$. For each $g\in G$, we may write $g$ uniquely as $g=g_Ng_M$ where $g_N\in N$ and $g_M$. Let $g\in G$. From the arguments above, we see that $\orb_A(g)=\{hn:h\in\orb_{\inner{\tau}}(g_M), n\in N\}$, which is exactly $\cl_{\mathsf{S}}(g)$. This completes the proof of (1).
	
	Now we show (2). Let $\mathsf{D}=\mathsf{U}\times\mathsf{V}$. Extend $\sigma$ to an automorphism $\tilde{\sigma}$ of $G$ by setting $y^{\tilde{\sigma}}=y$, and extend $\tau$ to an automorphism $\tilde{\tau}$ of $G$ by setting $x^{\tilde{\tau}}=x$. Let $B=\inner{\tilde{\sigma},\tilde{\tau}}$. Then $N$ and $M$ are both $B$-invariant. If $g\in N\cup M$, then it is easy to see that $\orb_B(g)=\cl_{\mathsf{D}}(g)$. If $g\not\in N\cup M$, then 
	\[\orb_B(g)=\bigcup_{i=1}^{d_2}\bigl\{g_N^{m_1}g_M^{m_2^i},g_N^{m_1^2}g_M^{m_2^i},\dotsc,g_N^{m_1^{d_1-1}}g_M^{m_2^i}\bigr\}\]
	 where $d_i$ is the order of $m_i$ modulo $p$. Thus $\orb_B(g)=\orb_{\inner{\sigma}}(g_N)\times\orb_{\inner{\tau}}(g_M)=\cl_{\mathsf{S}}(g)$. This completes the proof of (2).
\end{proof}

We may now give a more precise statement and proof of Theorem~\ref{thmB}. We first remark that having a nonidentity superclass of size $1$ is equivalent to the condition $Z(\mathsf{S}) > 1$ and having a nonprincipal linear supercharacter
is equivalent to the condition $[G,\mathsf{S}] < G$.

\begin{thm}\label{main actual}
Let $\mathsf{S}$ be a supercharacter theory of $G$. Assume that $[G,\mathsf{S}]<G$ or $Z(\mathsf{S})>1$. One of the following holds:
	\begin{enumerate}[label={\bf(\arabic*)}]
		\item $\mathsf{S}$ is a $\ast$-product over $[G,\mathsf{S}]$.
		\item $\mathsf{S}$ is a $\ast$-product over $Z(\mathsf{S})$.
		\item $\mathsf{S}$ is a direct product over $[G,\mathsf{S}]$ and $Z(\mathsf{S})$.
	\end{enumerate}
In particular, $\mathsf{S}$ comes from automorphisms. 
\end{thm}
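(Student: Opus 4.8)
The plan is to combine the duality construction $\mathsf S\mapsto\check{\mathsf S}$ with a case analysis governed by the two $\mathsf S$-normal subgroups $Z=Z(\mathsf S)$ and $D=[G,\mathsf S]$, each of order $1$, $p$, or $p^2$. First I would dispose of the degenerate case $\mathsf S=\mathsf m(G)$ (equivalently $D=1$, equivalently $Z=G$), which visibly comes from the trivial group of automorphisms. For the rest, $\norm D\in\{p,p^2\}$ and $\norm Z\in\{1,p\}$, and the hypothesis excludes exactly the pair $(\norm D,\norm Z)=(p^2,1)$. The organizing reduction is that duality interchanges the two invariants: using $Z(\check{\mathsf S})=D^{\perp}$ and $[\irr(G),\check{\mathsf S}]=Z^{\perp}$, the condition $D<G$ for $\mathsf S$ is precisely $Z(\check{\mathsf S})>1$ for $\check{\mathsf S}$, and $\mathsf S$ is a $\ast$-product over an $\mathsf S$-normal $N$ if and only if $\check{\mathsf S}$ is a $\ast$-product over $N^{\perp}$. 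Since $\check{\phantom{a}}$ is an involution sending automorphism theories to automorphism theories, I may assume throughout that $\norm Z=p$, handling the case $Z=1$ by passing to $\check{\mathsf S}$ (whose conclusion then dualizes to conclusion (1) for $\mathsf S$).

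With $\norm Z=p$ fixed I would split on $\norm D$. If $\norm D=p$ and $D=Z$, then $\mathsf S_Z=\mathsf m(Z)$, so any basic character $\chi$ with $Z\nleq\ker\chi$ is nonlinear; Lemma~\ref{actiondivide} (applicable since $[G,\mathsf S]=Z\le Z$) forces $\chi(1)\mid p$, hence $\chi(1)=p=\norm{G:Z}$, and Lemma~\ref{centerequality} then gives that $\chi$ vanishes off $Z$, so $\mathsf S$ is a $\ast$-product over $Z$. If $\norm D=p$ but $D\ne Z$, then $G=D\times Z$ with both factors $\mathsf S$-normal; combining $\cl_{\mathsf S}(g)z=\cl_{\mathsf S}(gz)$ for $z\in Z$ with the fact that $\mathsf S$-classes inside $D$ are $\mathsf S_D$-classes, every set $\cl_{\mathsf S_D}(g)\,z$ (with $g\in D$, $z\in Z$) is a single $\mathsf S$-class, and these are exactly the classes of $\mathsf S_D\times\mathsf S_Z$; thus $\mathsf S=\mathsf S_D\times\mathsf S_Z$, which is conclusion (3). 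Both sub-cases are short.

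The substantial case is $\norm D=p^{2}$, i.e. $D=G$, where the target is conclusion (2): $\mathsf S$ is a $\ast$-product over $Z$, equivalently every basic character $\chi$ with $Z\nleq\ker\chi$ satisfies $\chi(1)=p$. By Lemma~\ref{centerequality} every basic character already has $\chi(1)\le p$, and the observation used in the proof of Lemma~\ref{centerequality} shows that $\chi_Z$ is a multiple of a single linear character $\mu$ of $Z$; hence each part of $\mathcal X$ lies in a single coset $\kappa_\mu$ of $\irr(G/Z)=Z^{\perp}$ in $\irr(G)$, and the claim is equivalent to the statement that every nonidentity coset $\kappa_\mu$ is a single part. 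Writing $G=Z\times W$, $\chi(z,w)=\mu(z)f(w)$, and fixing a generator $z_0$ of $Z$, the relation $\chi(gz_0^{i})=\mu(z_0)^{i}\chi(g)$ shows both that $\chi$ vanishes on any $\mathsf S$-class that is a union of $Z$-cosets and — via the arithmetic fact that a nonempty proper subsum of the distinct $p$-th roots of unity never vanishes — that $f(w_0)=0$ for a single $w_0\ne0$ already forces $\chi$ to vanish off $Z$. Thus producing one nonidentity ``good'' class (a class saturated by $Z$-cosets) collapses every $\kappa_\mu$ to a single part and finishes this case.

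The crux, and the step I expect to be the main obstacle, is exactly the production of such a good class. Since $Z\le[G,\mathsf S]=G$, a good class ought to exist, but $Z\le[G,\mathsf S]$ only says that \emph{products} of intra-class differences fill out $Z$, not that some single class already contains a whole $Z$-coset. I would close this gap by treating the remaining ``totally split'' configuration directly: if every nonidentity class meets each $Z$-coset in at most one point, then the Schur-ring (convolution) closure of the class indicators $\{\mathbbm 1_K\}$ rigidifies the ``shapes'' of these classes over the quotient theory $\mathsf S_{G/Z}$, forcing them to share a common slope, so that $[G,\mathsf S]$ becomes a complement to $Z$ of order $p$ — contradicting $D=G$. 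Once every case has exhibited $\mathsf S$ (or $\check{\mathsf S}$) as a $\ast$- or direct product over proper nontrivial subgroups, Lemma~\ref{autoproducts} shows it comes from automorphisms, and duality carries this back to $\mathsf S$ itself.
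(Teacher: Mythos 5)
Your overall architecture --- dispose of $\mathsf{m}(G)$, use duality to normalize to $\norm{Z(\mathsf{S})}=p$, then split on the order of $[G,\mathsf{S}]$ --- is the mirror image of the paper's reduction to $1<[G,\mathsf{S}]<G$, and your two easy sub-cases are correct. Your treatment of $\norm{[G,\mathsf{S}]}=p$ with $[G,\mathsf{S}]\ne Z(\mathsf{S})$ (using $\cl_{\mathsf{S}}(gz)=\cl_{\mathsf{S}}(g)z$ to exhibit every superclass as $K\times\{z\}$ with $K\in\mathrm{Cl}(\mathsf{S}_{[G,\mathsf{S}]})$) is in fact more direct than the paper's character count feeding into Corollary~\ref{direct}, and your roots-of-unity observation --- a nonempty proper subsum of the $p$ distinct $p$-th roots of unity never vanishes, so one zero of a basic character off $Z(\mathsf{S})$ forces vanishing on all of $G\setminus Z(\mathsf{S})$ --- is a clean substitute for the paper's route through Lemma~\ref{induce} and the $j$-th power maps.

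The step you yourself flag as the crux, however, is genuinely missing, and it is the step that carries the weight of the whole theorem. In the configuration $\norm{Z(\mathsf{S})}=p$, $[G,\mathsf{S}]=G$, with every nonidentity superclass meeting each $Z(\mathsf{S})$-coset in at most one point, you assert that the ``Schur-ring closure rigidifies the shapes \dots forcing them to share a common slope,'' but no mechanism is given, and the convolution structure alone is not what does the work. The paper's argument (carried out in the dual group) is an explicit computation: fix $h\notin Z=Z(\mathsf{S})$; since $\mathsf{S}_{G/Z}$ is a power-map theory on $C_p$ and $\norm{\cl_{\mathsf{S}}(h)}=\norm{\cl_{\mathsf{S}_{G/Z}}(hZ)}$ by Lemma~\ref{class}, one may write $\cl_{\mathsf{S}}(h)=\{h,h^mz_1,\dotsc,h^{m^{d-1}}z_{d-1}\}$; the two symmetries $\cl_{\mathsf{S}}(g^m)=\{k^m:k\in\cl_{\mathsf{S}}(g)\}$ and $\cl_{\mathsf{S}}(gz)=\cl_{\mathsf{S}}(g)z$ then give $\cl_{\mathsf{S}}(h^m)z_1=\cl_{\mathsf{S}}(h)$, and matching elements coset-by-coset (legitimate precisely because of your split hypothesis) yields the recursion $z_i=z_1^{(m^i-1)/(m-1)}$. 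Hence every intra-class difference lies in $\inner{h^{m-1}z_1}$, so $[G,\mathsf{S}]\le\inner{h^{m-1}z_1}<G$, the desired contradiction. Until you supply this (or an equivalent) computation, your case $[G,\mathsf{S}]=G$ is unproved, and with it conclusion (2) --- and, since you obtain conclusion (1) by dualizing into that very case, conclusion (1) as well.
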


\begin{proof}
First observe that $[G,\mathsf{S}]=1$ if and only if $Z(\mathsf{S})=G$, and the result holds trivially in this case. Thus it suffices to assume that $[G,\mathsf{S}]$ or $Z(\mathsf{S})$ is nontrivial and proper. We can make another reduction by considering $G^\ast=\irr(G)$---the dual of $G$---and the dual supercharacter theory $\check{S}$ of $\mathsf{S}$. If $\mathsf{S}=(\mathcal{X},\mathcal{K})$, then $Z(\mathsf{S})$ and $\mathrm{BCh}(G/[G,\mathsf{S}])$ are comprised of the parts of $\mathcal{K}$ and $\mathcal{X}$ of size 1, respectively. From this description, it is clear that $Z(\mathsf{S})^\ast=G^\ast/[G^\ast,\check{S}]$ and $Z(\check{\mathsf{S}})=(G/[G,\mathsf{S}])^\ast$. Thus it suffices to prove that the result holds in the case that  $1<[G,\mathsf{S}]<G$, which we now assume.

We first assume additionally that $[G,\mathsf{S}]$ is the unique $\mathsf{S}$-normal subgroup of index $p$. Then either ${Z}(\mathsf{S})=1$ or ${Z}(\mathsf{S})=[G,\mathsf{S}]$. Assume that ${Z}(\mathsf{S})=[G,\mathsf{S}]$. Let $\chi$ be an $\mathsf{S}$-character without $[G,\mathsf{S}]$ in its kernel. Then $\chi(1)=\norm{G:{Z}(\mathsf{S})}=p$. By Lemma~\ref{centerequality}, we deduce that $\chi$ vanishes on $G\setminus {Z}(\mathsf{S})$. Hence we see from \cite[Theorem 2]{SBMLsctProducts} that $\mathsf{S}$ is a $\ast$-product over $[G,\mathsf{S}]$. So we now assume that ${Z}(\mathsf{S})=1$. Let $\chi\in\mathrm{BCh}(\mathsf{S}\mid[G,\mathsf{S}])$, and let $\psi\in\mathrm{BCh}([G,\mathsf{S}])$ lie under $\chi$. If every element of $\irr(G/[G,\mathsf{S}])$ fixes $\chi$, then $\chi$ vanishes on $G\setminus[G,\mathsf{S}]$ and so $\chi=\psi^G$ by Lemma~\ref{induce}. Let $1\le j\le p-1$. Then $\chi^j\in\mathrm{BCh}(\mathsf{S})$ and $\chi^j=(\psi^j)^G$, where here $\alpha^j(g)=\alpha(g^j)$. As $j$ ranges over all $j$, $\psi^j$ ranges over all nonprincipal basic $\mathsf{S}_{[G,\mathsf{S}]}$-characters. Thus we see that every $\chi\in\mathrm{BCh}(\mathsf{S}\mid[G,\mathsf{S}])$ vanishes on $G\setminus[G,\mathsf{S}]$. Hence $\mathsf{S}$ is a $\ast$-product over $[G,\mathsf{S}]$ in this case as well. So assume that this is not the case. We instead consider the dual situation in $G^\ast=\irr(G)$. Then $Z={Z}(\check{\mathsf{S}})$ has order $p$, $[G^\ast,\check{\mathsf{S}}]=G^\ast$ and that $\cl_{\check{\mathsf{S}}}(g)$ is not fixed by the action of $Z$ for any $g\in G^\ast\setminus Z$. Let $h\in G^\ast\setminus Z$. Then $\inner{h}$ gives a transversal for $Z$ in $G^\ast$. Since $G^\ast/Z\simeq C_p$, there exists $1\le m\le p-1$ such that $\cl_{\mathsf{\check{S}}_{G/Z}}(hZ)=\{hZ,h^mZ,\dotsc,h^{m^{d-1}}Z\}$, where $d=\mathrm{ord}_p(m)$. Also, since $\norm{\cl_{\check{\mathsf{S}}}(h)}=\norm{\cl_{\check{\mathsf{S}}_{G/Z}}(hZ)}$ by Lemma~\ref{class}, there exist $z_1,z_2,\dotsc,z_{d-1}\in{Z}(\check{\mathsf{S}})$ such that $\cl_{\check{\mathsf{S}}}(h)=\{h,h^mz_1,h^{m^2}z_2,\dotsc,h^{m^{d-1}}z_{d-1}\}$. So 
\[\cl_{\check{\mathsf{S}}}(h^m)=\cl_{\mathsf{S}}(h)^m=\{h^m,h^{m^2}z_1^m,h^{m^3}z_2^m,\dotsc,hz_{d-1}^m\},\]
from which it follows that $\cl_{\check{\mathsf{S}}}(h^m)z_1=\cl_{\check{\mathsf{S}}}(h)$. Since $\cl_{\check{\mathsf{S}}}(h)$ is not fixed by multiplication by any element of $Z$, this implies that $z_2=z_1^{m+1}$. Similarly $z_3=z_2^mz_1=z_1^{m^2+m+1}$. Continuing this way, we deduce that $z_i =z^{1+m+m^2+\dotsb+m^{i-1}}=z_1^{(m^i-1)/(m-1)}$ for each $1\le i\le d-1$, where $z=z_1$. Thus we see that
$h^{-1}k\in\inner{h^{m-1}z}$ for every $k\in\cl_{\check{\mathsf{S}}}(h)$. Similarly, for every $w\in Z$ and $k\in\cl_{\check{\mathsf{S}}}(hw)$, $(hw)^{-1}k\in\inner{h^{m-1}z}$. Since every element of $G^\ast$ has the form $h^jw$ for some integer $j$ and $w\in Z$, it follows that $g^{-1}k\in\inner{h^{m-1}z}$ for every $g\in G^\ast$ and $k\in\cl_{\check{\mathsf{S}}}(g)$. This implies that $[G^\ast,\check{\mathsf{S}}]=\inner{h^{m-1}z}<G^\ast$, a contradiction. 

We may now assume that there is another $\mathsf{S}$-normal subgroup, say $N$. Since $[G,\mathsf{S}]<G$, $[G,\mathsf{S}]\ne Z(\mathsf{S})$. So $N\cap [G,\mathsf{S}]=1$, which implies $N={Z}(\mathsf{S})$. Let $\chi\in\mathrm{BCh}(\mathsf{S})$ satisfy $[G,\mathsf{S}]\nleq\ker(\chi)$. Let $\psi\in\mathrm{BCh}(\mathsf{S}_{[G,\mathsf{S}]})$ lie under $\chi$. Then $\chi(1)/\psi(1)\in\{1,p\}$ by Lemma~\ref{actiondivide}. Assume that $\chi(1)=p\psi(1)$. Since $\chi(1)\le\norm{G:Z(\mathsf{S})}=p$ by Lemma~\ref{centerequality}, we deduce that $\psi(1)=1$. Since $[G,\mathsf{S}]\nleq\ker(\chi)$, $\psi$ is nonprincipal and thus $[[G,\mathsf{S}],\mathsf{S}]=1$. We conclude that $\mathsf{S}_{[G,\mathsf{S}]}=\mathsf{m}([G,\mathsf{S}])$, which contradicts the fact that $[G,\mathsf{S}]\ne Z(\mathsf{S})$. So $\chi(1)=\psi(1)$ and so $\psi^G$ is the sum of $p$ distinct $\mathsf{S}$-characters. If $\mathrm{BCh}(\mathsf{S}/[G,\mathsf{S}])$ is the set of basic $\mathsf{S}$-characters with $[G,\mathsf{S}]$ in their kernel and $\mathrm{BCh}(\mathsf{S}\mid[G,\mathsf{S}])$ is the set of those without $[G,\mathsf{S}]$ in their kernel, then by the above argument we see
\begin{align*}\norm{\mathrm{BCh}(\mathsf{S})}&=\norm*{\mathrm{BCh}(\mathsf{S}/[G,\mathsf{S}])}+p\norm*{\mathrm{BCh}(\mathsf{S}\mid[G,\mathsf{S}])}\\
&=p+p\norm*{\mathrm{BCh}(\mathsf{S}\mid[G,\mathsf{S}])}\\
&=p\norm*{\mathrm{BCh}(\mathsf{S}_{[G,\mathsf{S}]})}=\norm*{\mathsf{S}_{{Z}(\mathsf{S})}}\cdot\norm*{\mathsf{S}_{[G,\mathsf{S}]}}.
\end{align*}
It follows from Corollary~\ref{direct} that $\mathsf{S}=\mathsf{S}_{{Z}(\mathsf{S})}\times\mathsf{S}_{[G,\mathsf{S}]}$.

The final statement is a consequence of Lemma~\ref{autoproducts}
\end{proof}

\section{Partition supercharacter theories} \label{partition}

We now describe a type of supercharacter theory that is (essentially) unique to elementary abelian groups of rank two. As in the previous section, $p$ is a prime and $G$ is the elementary abelian group $C_p\times C_p$.

\begin{lem}\label{partitionsct}
Let $G$ and let $H_1,H_2,\dotsc,H_{p+1}$ be the nontrivial, proper subgroups of $G$. For every subset $I\subseteq\{1,2,\dotsc,p+1\}$, define $N_I=\bigcup_{i\in I}(H_i\setminus1)$. For every partition $\mathcal{P}$ of $\{1,2,\dotsc,p+1\}$, the partition $\{N_I:I\in\mathcal{P}\}$
of $G\setminus 1$ gives the nonidentity superclasses for a supercharacter theory $\mathsf{S}_{\mathcal{P}}$ of $G$.
\end{lem}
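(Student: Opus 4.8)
The plan is to verify directly the three Diaconis--Isaacs conditions by exhibiting both the superclass partition $\mathcal{K}$ and a compatible partition $\mathcal{X}$ of $\irr(G)$. For the superclasses I take $\mathcal{K}=\{\{1\}\}\cup\{N_I:I\in\mathcal{P}\}$. This is a genuine partition of $G$ because the subgroups $H_1,\dotsc,H_{p+1}$ meet pairwise only in the identity and $\norm{G\setminus 1}=p^2-1=(p+1)(p-1)$ forces the $p+1$ sets $H_i\setminus 1$ (each of size $p-1$) to partition $G\setminus 1$; hence so do the unions $N_I$ for $I\in\mathcal{P}$.

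To build $\mathcal{X}$ I exploit the self-duality of the subgroup lattice of $G$. For each $i$ let $H_i^\perp=\{\psi\in\irr(G):H_i\le\ker\psi\}$ be the annihilator, a subgroup of $\irr(G)$ of order $\norm{G:H_i}=p$; since annihilation is an order-reversing bijection on the subgroup lattice, $H_i\mapsto H_i^\perp$ is a bijection from the $p+1$ subgroups of order $p$ in $G$ onto those of $\irr(G)$, so the sets $H_i^\perp\setminus\{\mathbbm{1}\}$ partition $\irr(G)\setminus\{\mathbbm{1}\}$. Transporting the same partition $\mathcal{P}$ through this bijection, I set $M_I=\bigcup_{i\in I}(H_i^\perp\setminus\{\mathbbm{1}\})$ and $\mathcal{X}=\{\{\mathbbm{1}\}\}\cup\{M_I:I\in\mathcal{P}\}$. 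Then $\mathcal{X}$ partitions $\irr(G)$ with $\norm{\mathcal{X}}=\norm{\mathcal{P}}+1=\norm{\mathcal{K}}$, and $\{1\}\in\mathcal{K}$, so the first two conditions hold.

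The content of the argument is the third condition: for each part $X$ of $\mathcal{X}$, the character $\sigma_X=\sum_{\psi\in X}\psi$ (all irreducibles of $G$ are linear) must be constant on every part of $\mathcal{K}$. For $X=\{\mathbbm{1}\}$ this is clear. For $X=M_I$ I would evaluate $\sigma_{M_I}$ using the subgroup orthogonality relation $\sum_{\psi\in H_i^\perp}\psi(g)=p\cdot[g\in H_i]$, which holds because the characters in $H_i^\perp$ are exactly the characters of $G/H_i$ and $(H_i^\perp)^\perp=H_i$. This gives $\sum_{\psi\in H_i^\perp\setminus\{\mathbbm{1}\}}\psi(g)=p-1$ when $g\in H_i$ and $=-1$ otherwise. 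Since each nonidentity $g$ lies in exactly one $H_j$, summing over $i\in I$ yields $\sigma_{M_I}(g)=p-\norm{I}$ when $j\in I$ and $\sigma_{M_I}(g)=-\norm{I}$ when $j\notin I$ (and $\sigma_{M_I}(1)=\norm{I}(p-1)$). Because $\mathcal{P}$ is a partition, every element of a fixed block $N_{I'}$ has its index $j$ lying entirely inside $I$ (when $I'=I$) or entirely outside $I$ (when $I'\ne I$); hence $\sigma_{M_I}$ takes a single value on each $N_{I'}$, which is precisely condition three.

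With all three conditions verified, $(\mathcal{X},\mathcal{K})$ is a supercharacter theory $\mathsf{S}_{\mathcal{P}}$ with the prescribed nonidentity superclasses. I expect no deep obstacle here: the computation collapses cleanly once the disjointness of the $H_i$ (and of the $H_i^\perp$) is used. The only genuinely nontrivial insight is the self-dual setup of $\mathcal{X}$ via annihilators together with the \emph{same} index partition $\mathcal{P}$ on the dual side; recognizing that this is the correct dual partition, and that the final constancy hinges on $\mathcal{P}$ being a partition (so that indices never straddle a block), is the crux, after which everything is routine bookkeeping.
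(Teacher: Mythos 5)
Your proof is correct, but it proceeds along a genuinely different route from the paper's. The paper works entirely on the class side: it invokes the correspondence (due to Hendrickson) between supercharacter theories of an abelian group and Schur rings, so that it suffices to check that each product $\widehat{N_I}\widehat{N_J}$ of class sums is a nonnegative integer combination of the class sums $\widehat{N_L}$ and $1$; this follows from the identity $\widehat{H_i}\widehat{H_j}=\widehat{G}$ for $i\ne j$ (and $\widehat{H_i}^2=p\widehat{H_i}$), i.e.\ from the fact that any two distinct nontrivial proper subgroups generate $G$. You instead verify the Diaconis--Isaacs axioms directly, constructing the dual partition $\mathcal{X}$ explicitly via the annihilators $H_i^\perp$ transported along the same index partition $\mathcal{P}$, and checking constancy of $\sigma_{M_I}$ on each $N_{I'}$ by the orthogonality relation $\sum_{\psi\in H_i^\perp}\psi(g)=p\cdot[g\in H_i]$; your values $p-\norm{I}$, $-\norm{I}$, and $\norm{I}(p-1)$ are all correct, and the key point that each nonidentity $g$ lies in exactly one $H_j$ (so that $j$ never straddles a block of $\mathcal{P}$) is exactly right. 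What your approach buys is an explicit description of the supercharacters of $\mathsf{S}_{\mathcal{P}}$ and independence from the Schur-ring machinery; what the paper's approach buys is brevity and the observation (exploited in the surrounding discussion) that the construction works precisely because distinct minimal normal subgroups generate $G$, which is what lets the authors characterize when such partition theories exist more generally.
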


\begin{proof}
	To prove this, it suffices to show that there exist nonnegative integers $a_{I,J,1}$ and $a_{I,J,L}$ such that 
	\[\widehat{N_I}\widehat{N_J}=a_{I,J,1}\cdot1+\sum_{L\in\mathcal{P}}a_{I,J,L}\widehat{N_L}\]
	for every $I,J\in\mathcal{P}$. To that end, let $I,J\in\mathcal{P}$. Then
	\begin{align*}
		\widehat{N_I}\widehat{N_J}
		&=\left[\vphantom{\displaystyle\sum_{j\in J}}\sum_{i\in I}\left(\widehat{H_i}-1\right)\right]\left[\sum_{j\in J}\left(\widehat{H_j}-1\right)\right]\\
		&=\sum_{(i,j)\in I\times J}\left(\widehat{H_i}-1\right)\left(\widehat{H_j}-1\right)\\
		&=\sum_{(i,j)\in I\times J}\left(\widehat{H_i}\widehat{H_j}-\widehat{H_i}-\widehat{H_j}+1\right)\\
		&=\sum_{(i,j)\in I\times J}\left[\widehat{G}-\left(\widehat{H_i}-1\right)-\left(\widehat{H_j}-1\right)+3\cdot 1\right]\\
		&=\norm{I}\norm{J}\widehat{G}-\norm{J}\widehat{N_I}-\norm{I}\widehat{N_J}+3\norm{I}\norm{J}\cdot1\\
		&=(\norm{I}-1)(\norm{J}-1)\widehat{G}+\norm{J}\left(\widehat{G}-\widehat{N_I}\right)+\norm{I}\left(\widehat{G}-\widehat{N_J}\right)+2\norm{I}\norm{J}\cdot1\\
		&=(\norm{I}-1)(\norm{J}-1)\widehat{G}+\norm{J}\sum_{\substack{L\in\mathcal{P}\\L\ne I}}\widehat{N_I}+\norm{I}\sum_{\substack{L\in\mathcal{P}\\L\ne J}}\widehat{N_J}+2\norm{I}\norm{J}\cdot1,
	\end{align*}
as required.
\end{proof}
We will call the supercharacter theory of Lemma~\ref{partitionsct} the {\it partition supercharacter theory of $G$ corresponding to $\mathcal{P}$}. As can be seen from the above proof, the reason that this construction works is because any two distinct normal subgroups generate $G$. As such, the same construction will work if $G$ is a direct product of two simple groups. That is, if $G=H_1\times H_2$, where $H_1\cong H_2$ is a simple group, then the set of nontrivial normal subgroups of $G$ is $\{H_1,H_2\}$. The only partitions of $\{1,2\}$ are $\{1,2\}$, which corresponds to $\mathrm{M}(G)$, and $\{\{1\},\{2\}\}$, which corresponds to $\mathrm{M}(H_1)\times\mathrm{M}(H_2)$. 

However, if $G$ has at least three nontrivial normal subgroups, it is not difficult to see that $G$ must be an elementary abelian group of rank two if any two distinct normal subgroups generate $G$. Indeed, this condition on $G$ implies that any nontrivial, proper normal subgroup is minimal normal. Let $L,N,M$ be distinct nontrivial, proper normal subgroups of $G$. Then $G=L\times N=L\times M=N\times M$, so $G=NM\le C_G(L)$ and $G=LM\le C_G(N)$. Thus we see that $G$ is abelian. So $L,N,M$ are cyclic of prime order. Since $L\cong G/N\cong M\cong G/L\cong N$, $G=C_p\times C_p$ for some prime $p$.

\begin{lem}\label{interval}
	Let $\mathcal{P}$ be the partition of $\{1,2,\dotsc,p+1\}$ consisting of all singletons. A supercharacter theory $\mathsf{S}$ is a partition supercharacter theory if and only if $\mathsf{S}_{\mathcal{P}}\preccurlyeq\mathsf{S}$. In particular the set of partition supercharacter theories is an interval in the lattice of supercharacter theories. 
\end{lem}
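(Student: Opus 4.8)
The plan is to prove the biconditional in both directions and then observe that the "interval" conclusion follows formally. For the lemma, write $\mathsf{S}_{\mathcal{P}}$ for the finest partition supercharacter theory, whose nonidentity superclasses are exactly the punctured subgroups $H_i\setminus 1$ for $i=1,\dotsc,p+1$. The key structural fact I would isolate first is a clean characterization of partition supercharacter theories: a supercharacter theory $\mathsf{S}$ is a partition supercharacter theory precisely when each nonidentity $\mathsf{S}$-class is a union of sets of the form $H_i\setminus 1$, equivalently when every $H_i\setminus 1$ is contained in a single $\mathsf{S}$-class. This is just the statement that $\mathrm{Cl}(\mathsf{S})$ coarsens the partition $\{H_i\setminus 1\}\cup\{\{1\}\}=\mathrm{Cl}(\mathsf{S}_{\mathcal{P}})$.

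For the forward direction, suppose $\mathsf{S}=\mathsf{S}_{\mathcal{Q}}$ is a partition supercharacter theory corresponding to some partition $\mathcal{Q}$ of $\{1,\dotsc,p+1\}$. Then by definition each nonidentity $\mathsf{S}$-class has the form $N_I=\bigcup_{i\in I}(H_i\setminus 1)$ for a block $I\in\mathcal{Q}$, so every $\mathsf{S}$-class is a union of $\mathsf{S}_{\mathcal{P}}$-classes; hence $\mathsf{S}_{\mathcal{P}}\preccurlyeq\mathsf{S}$ directly from the definition of the order. For the reverse direction, suppose $\mathsf{S}_{\mathcal{P}}\preccurlyeq\mathsf{S}$. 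Then every $\mathsf{S}$-class is a union of $\mathsf{S}_{\mathcal{P}}$-classes, so each nonidentity $\mathsf{S}$-class is a union of sets $H_i\setminus 1$. This union structure induces a partition $\mathcal{Q}$ of the index set $\{1,\dotsc,p+1\}$ in the obvious way: the block of $i$ records which other indices $j$ satisfy that $H_j\setminus 1$ lies in the same $\mathsf{S}$-class as $H_i\setminus 1$. By Lemma~\ref{partitionsct}, $\mathcal{Q}$ determines a partition supercharacter theory $\mathsf{S}_{\mathcal{Q}}$, and by construction $\mathrm{Cl}(\mathsf{S}_{\mathcal{Q}})=\mathrm{Cl}(\mathsf{S})$. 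Since $\mathsf{S}$-classes determine $\mathsf{S}$ uniquely (via \cite[Theorem 2.2 (c)]{ID07}, already cited in the preliminaries), we conclude $\mathsf{S}=\mathsf{S}_{\mathcal{Q}}$ is a partition supercharacter theory.

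The "in particular" clause is then immediate: the set of partition supercharacter theories is exactly $\{\mathsf{S}:\mathsf{S}_{\mathcal{P}}\preccurlyeq\mathsf{S}\}$, which is the principal up-set (filter) generated by $\mathsf{S}_{\mathcal{P}}$. To call this an interval I would note that it is bounded above by the coarsest supercharacter theory $\mathsf{M}(G)$ (which is itself the partition supercharacter theory corresponding to the one-block partition), so the set equals the interval $[\mathsf{S}_{\mathcal{P}},\mathsf{M}(G)]$ in $\mathrm{SCT}(G)$.

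I expect the only real subtlety to be verifying that the reverse direction genuinely lands inside the class of partition supercharacter theories rather than merely producing a coarsening of $\mathsf{S}_{\mathcal{P}}$ that might fail to be a supercharacter theory at all — but this is exactly what Lemma~\ref{partitionsct} guarantees, since every partition $\mathcal{Q}$ of the index set yields a genuine supercharacter theory. So the main obstacle is essentially bookkeeping: confirming that an arbitrary coarsening of $\mathrm{Cl}(\mathsf{S}_{\mathcal{P}})$ arising as $\mathrm{Cl}(\mathsf{S})$ respects the subgroup structure well enough to be recorded as a partition of the indices, which follows because the $H_i\setminus 1$ are the atoms being permuted and unioned. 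Everything else reduces to unwinding the definitions of $\preccurlyeq$ and of $\mathsf{S}_{\mathcal{P}}$.
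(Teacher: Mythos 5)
Your proof is correct and follows essentially the same route as the paper: both arguments reduce the claim to the observation that $\mathsf{S}_{\mathcal{P}}\preccurlyeq\mathsf{S}$ is precisely the condition that every nonidentity $\mathsf{S}$-class is a union of the punctured subgroups $H_i\setminus 1$, which is the defining property of a partition supercharacter theory. The paper merely phrases this condition elementwise (as $g^i\in\cl_{\mathsf{S}}(g)$ for all $i$), while you make the index-set partition $\mathcal{Q}$ and the appeal to uniqueness of a supercharacter theory given its classes explicit; the content is the same.
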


\begin{proof}
	The supercharacter theory $\mathsf{S}$ is a partition supercharacter theory if and only if $\inner{g}\setminus\{1\}\subseteq\cl_{\mathsf{S}}(g)$ holds for every $g\in G$. In other words, if and only if $g^i\in \cl_{\mathsf{S}}(g)$ holds for every $g\in G$ and $1\le i\le p-1$. This is exactly the condition $\mathsf{S}_{\mathcal{P}}\preccurlyeq\mathsf{S}$. Thus the interval $[\mathsf{S}_{\mathcal{P}},\mathsf{M}(G)]$ is the set of of partition supercharacter theories of $G$.
\end{proof}

Next we give an example that shows the set of automorphic supercharacter theories is not a semilattice of $\mathrm{SCT}(G)$. 

\begin{exam}
	Let $p\ge 5$. Let $H_1,H_2,H_3$ be distinct subgroups of $G$ of order $p$. Define the supercharacter theories $\mathsf{S}$ and $\mathsf{T}$ as follows: $\mathsf{S}=\mathsf{M}(H_1)\times\mathsf{M}(H_2)$, $\mathsf{T}=\mathsf{M}(H_2)\times\mathsf{M}(H_3)$, which are both partition supercharacter theories. So $\mathsf{S}\wedge\mathsf{T}$ is also a partition supercharacter theory by Lemma~\ref{interval}. It is not difficult to see that $H_1,H_2,H_3$ are the only nontrivial, proper $(\mathsf{S}\wedge\mathsf{T})$-normal subgroups of $G$. By Theorem~\ref{thmA}, both $\mathsf{S}$ and $\mathsf{T}$ come from automorphisms. However, $\mathsf{S}\wedge\mathsf{T}$ does not come from automorphisms, since $\mathsf{S}\wedge\mathsf{T}$ has exactly three nontrivial, proper supernormal subgroups (see Lemma~\ref{wielandt}). 
\end{exam} 

\section{Nontrivial $\mathsf{S}$-normal subgroups} \label{S normal}

In this section, we discuss the structure of the supercharacter theories of $G=C_p\times C_p$, $p$ a prime, that have nontrivial, proper supernormal subgroups. We begin by studying the structure of supercharacter theories with exactly one such subgroup. 

To prove Theorem~\ref{main actual}, we showed that if $[G,\mathsf{S}]$ or $Z(\mathsf{S})$ were the unique $\mathsf{S}$-normal subgroup of $G$, then $\mathsf{S}$ is a $\ast$-product. One may wonder if a similar result holds for any supercharacter theory of $G$ with a unique $\mathsf{S}$-normal subgroup. This is not the case, as the next example illustrates. 

\begin{exam}
We show that not every supercharacter theory of $C_p\times C_p$ with a unique nontrivial, proper supernormal subgroup is a $\ast$-product. This example comes from $G=C_5\times C_5$. Write $G=\inner{x,y}$. Let 
\begin{align*}K
	&=\bigl\{\{1\},\{y,y^2,y^3,y^4\},\{x,x^2,x^3,x^4,xy^4,x^2y^3,x^3y^2,x^4y\}\bigr\}\\
	&\cup\bigl\{\{xy,x^2y^2,x^3y^3,x^4y^4,xy^3,x^2y,x^3y^4,x^4y^2,xy^2,x^2y^4,x^3y,x^4y^3\}\bigr\}.
\end{align*}
One may readily verify that $K$ gives the set of $\mathsf{S}$-classes for a supercharacter theory $\mathsf{S}$ of $G$. Observe that $N=\inner{y}$ is the unique nontrivial, proper $\mathsf{S}$-normal subgroup. However $\mathsf{S}$ is not a $\ast$-product over $N$ since, for example, $x$ and $xy$ lie in different $\mathsf{S}$-classes.\hfill$\square$
\end{exam}

Observe that the above supercharacter theory is an example of a partition supercharacter theory. Indeed, the supercharacter theories $\mathsf{S}$ with a unique nontrivial, proper $\mathsf{S}$-normal subgroup that we have observed is either a $\ast$-product or a partition supercharacter theory. In particular, each supercharacter theory of this form has come from automorphisms or has been a partition supercharacter theory.

We have observed a similar phenomenon for those with exactly two nontrivial, proper $\mathsf{S}$-normal subgroups. Specifically, it appears as though every supercharacter theory of $G$ with exactly two nontrivial, proper supernormal subgroups either comes from automorphisms or is a partition supercharacter theory. It is however not the case that each such supercharacter theory that is not a partition supercharacter theory is a direct product, as can be seen from the next example.

\begin{exam}\label{not direct}
	Let $G=C_5\times C_5$. Let $x$ and $y$ be distinct nonidentity elements of $G$. Let $H=\inner{x}$ and $N=\inner{y}$. Let $\sigma$ be the automorphism of $G$ defined by $\sigma(x^iy^j)=x^{-i}y^{2j}$. Then the orbit of every element lying outside of $H$ has size four, and the orbit of every nonidentity element of $H$ has size two. Let $\mathsf{S}$ be the supercharacter theory of $G$ coming from $\inner{\sigma}$. Then $H$ and $N$ are the only nontrivial, proper $\mathsf{S}$-normal subgroups of $G$, $\norm{\mathsf{S}_H}=3$, $\norm{\mathsf{S}_N}=2$. Since $\norm{\mathsf{S}_H\times\mathsf{S}_N}=\norm{\mathsf{S}_H}\cdot\norm{\mathsf{S}_N}=6<8=\norm{\mathsf{S}}$, $\mathsf{S}$ is not a direct product by Corollary~\ref{direct}. 
\end{exam}

As mentioned just prior to Example~\ref{not direct}, it appears as though every supercharacter theory of $G$ with exactly two nontrivial, proper supernormal subgroups either comes from automorphisms or is a partition supercharacter theory (in fact, we have yet to find any supercharacter theory of $C_p\times C_p$ that does not either comes from automorphisms or partitions). We do have some evidence of this, but only have one weak result in this direction. Before giving the result, we set up some convenient notation that will be used for the remainder of the paper.

Let $\mathsf{S}$ be a supercharacter theory of $G$ and let $H$ be a nontrivial, proper $\mathsf{S}$-normal subgroup. Then $H$ is cyclic of prime order, so $\mathsf{S}_H$ comes from automorphisms, say from the the subgroup $A\le \mathrm{Aut}(H)$. Since $\mathrm{Aut}(H)\cong C_{p-1}$ is just the collection of power maps, there exists an integer $m$ such that $A$ is generated by the automorphism sending an element to its $m^{\rm th}$-power. Thus if $g\in H$, then $\cl_{\mathsf{S}_H}(g)=\{g,g^m,g^{m^2},\dotsc\}$. We denote this supercharacter theory by $[H]_m$. Given an integer $m$ and $g\in G$, we let $[g]_m$ denote the set $\{g,g^m,g^{m^2},\dotsc\}$. We let $\norm{m}_p$ denote the order of $m$ modulo $p$, which also the size of $[g]_m$ for $1\ne g\in G$. 

\begin{lem}\label{coprime 2}
	Let $\mathsf{S}$ have exactly two $\mathsf{S}$-normal subgroups $H$ and $N$. Write $\mathsf{S}_H=[H]_{m_1}$ and $\mathsf{S}_N=[N]_{m_2}$. Let $d_i=\norm{m_i}_p$, $i=1,2$, and assume that $(d_1,d_2)=1$. Then $\mathsf{S}=\mathsf{S}_H\times\mathsf{S}_N$. In particular, $\mathsf{S}$ comes from automorphisms.
\end{lem}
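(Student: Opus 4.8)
The plan is to apply Corollary~\ref{direct}. Since $H$ and $N$ are distinct subgroups of order $p$ in $G=C_p\times C_p$, we have $G=H\times N$ with both factors $\mathsf{S}$-normal, so $\mathsf{S}\preccurlyeq\mathsf{S}_H\times\mathsf{S}_N$, with equality precisely when $\norm{\mathsf{S}}=\norm{\mathsf{S}_H}\cdot\norm{\mathsf{S}_N}=\bigl(1+\tfrac{p-1}{d_1}\bigr)\bigl(1+\tfrac{p-1}{d_2}\bigr)$. Thus the entire problem reduces to computing $\norm{\mathsf{S}}$, and the final assertion that $\mathsf{S}$ comes from automorphisms is then immediate from Lemma~\ref{autoproducts}(2).

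To carry out the count, I would sort the basic $\mathsf{S}$-characters according to which of $H,N$ they contain in their kernel. A character containing both contains $HN=G$, so it is principal. Those containing $N$ but not $H$ are exactly the inflations of the nonprincipal basic $\mathsf{S}^{G/N}\cong\mathsf{S}_H=[H]_{m_1}$-characters, each of degree $d_1$; dually those containing $H$ but not $N$ have degree $d_2$. Using $\sum_{\chi\in\mathrm{BCh}(\mathsf{S})}\chi(1)=\norm{\irr(G)}=p^2$ together with $\sum_{\psi\in\mathrm{BCh}(\mathsf{S}_H)}\psi(1)=\norm{H}=p$, the basic characters $\chi$ with neither $H$ nor $N$ in their kernel have degrees summing to $p^2-1-(p-1)-(p-1)=(p-1)^2$. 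A parallel bookkeeping on the superclass side (the $\mathsf{S}$-classes inside $H\cup N$ are exactly the $\mathsf{S}_H$- and $\mathsf{S}_N$-classes) shows that the number of these ``off-diagonal'' basic characters equals the number $r$ of $\mathsf{S}$-classes lying outside $H\cup N$.

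The key step, and the crux of the argument, is a degree estimate for each off-diagonal $\chi$. Since $H\nleq\ker(\chi)$, some nonprincipal basic $\mathsf{S}_H$-character $\psi$ (of degree $d_1$) lies under $\chi$, so the integrality of $\chi(1)/\psi(1)$ recalled in Section~\ref{prelim} gives $d_1\mid\chi(1)$; symmetrically $d_2\mid\chi(1)$, and since $(d_1,d_2)=1$ we obtain $d_1d_2\mid\chi(1)$. In particular $\chi(1)\ge d_1d_2$, so from $\sum\chi(1)=(p-1)^2$ taken over the $r$ off-diagonal characters we get $r\le(p-1)^2/(d_1d_2)$. On the other hand $\mathsf{S}\preccurlyeq\mathsf{S}_H\times\mathsf{S}_N$ forces $r\ge(p-1)^2/(d_1d_2)$, the number of off-diagonal superclasses of the direct product. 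Hence $r=(p-1)^2/(d_1d_2)$, which yields $\norm{\mathsf{S}}=\norm{\mathsf{S}_H}\cdot\norm{\mathsf{S}_N}$ and, by Corollary~\ref{direct}, $\mathsf{S}=\mathsf{S}_H\times\mathsf{S}_N$.

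I expect the main obstacle to be justifying the divisibilities $d_1\mid\chi(1)$ and $d_2\mid\chi(1)$ cleanly, in particular confirming that a nonprincipal basic $\mathsf{S}_H$-character genuinely lies under every off-diagonal $\chi$, and being careful that the off-diagonal character count and the off-diagonal superclass count agree, so that the two opposing inequalities on $r$ may legitimately be combined. Everything else is routine bookkeeping built on the degree identity $\sum_{\chi}\chi(1)=\norm{\irr(G)}$ and the structure of $[H]_{m_1}$ and $[N]_{m_2}$.
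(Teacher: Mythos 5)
Your argument is correct, but it runs on the dual side of the one the paper uses. The paper's proof is purely superclass-theoretic and local: for a single $g\in G\setminus(H\cup N)$ it notes that $d_1=\norm{\cl_{\mathsf{S}_{G/N}}(gN)}$ and $d_2=\norm{\cl_{\mathsf{S}_{G/H}}(gH)}$ each divide $\norm{\cl_{\mathsf{S}}(g)}$, so coprimality gives $d_1d_2=\mathrm{lcm}(d_1,d_2)\divides\norm{\cl_{\mathsf{S}}(g)}$, while $\cl_{\mathsf{S}}(g)\subseteq\cl_{\mathsf{S}_H\times\mathsf{S}_N}(g)$, a set of size exactly $d_1d_2$; hence the two classes coincide for every such $g$ and the theories are equal outright, with no counting and no appeal to Corollary~\ref{direct}. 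You instead transfer the same lcm trick to character degrees: restriction of an off-diagonal basic character to $H$ (resp.\ $N$) is a multiple of a single nonprincipal basic $\mathsf{S}_H$- (resp.\ $\mathsf{S}_N$-) character of degree $d_1$ (resp.\ $d_2$), the integrality of $\chi(1)/\psi(1)$ gives $d_1d_2\divides\chi(1)$, and then the global degree sum $(p-1)^2$ pinches the off-diagonal class count $r$ between the two bounds, after which Corollary~\ref{direct} finishes. All the ingredients you invoke are available in the paper's preliminaries (in particular that $\chi_H$ is a constant multiple of a \emph{single} basic $\mathsf{S}_H$-character, which settles your worry about a nonprincipal constituent lying under $\chi$, and the identification of off-diagonal character and class counts via $\norm{\mathrm{BCh}(\mathsf{S})}=\norm{\mathrm{Cl}(\mathsf{S})}$). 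The trade-off: the paper's route is shorter and identifies each superclass directly, whereas yours requires the extra bookkeeping of matching counts but illustrates how the same coprimality mechanism appears on the character side.
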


\begin{proof}
	Let $g\in G\setminus (H\cup N)$. Then $d_2=\norm{\cl_{\mathsf{S}_{G/H}}(g)}$ and $d_1=\norm{\cl_{\mathsf{S}_{G/N}}(g)}$. So $d_1d_2=\mathrm{lcm}(d_1,d_1)$ divides $\norm{\cl_{\mathsf{S}}(g)}$. Since $\mathsf{S}\preccurlyeq\mathsf{S}_H\times\mathsf{S}_N$, we know $\cl_{\mathsf{S}}(g)\subseteq\cl_{\mathsf{S}_H\times\mathsf{S}_N}(g)$. Since $d_1d_2\le\norm{\cl_{\mathsf{S}}(g)}\le \norm{\cl_{\mathsf{S}_H\times\mathsf{S}_N}(g)}=d_1d_2$, we conclude that $\cl_{\mathsf{S}}(g)=\cl_{\mathsf{S}_H\times\mathsf{S}_N}(g)$. Hence $\mathsf{S}=\mathsf{S}_H\times\mathsf{S}_N$, and the result follows from Lemma~\ref{autoproducts}.
\end{proof}

We suspect that the condition $(d_1,d_2)=1$ in Lemma~\ref{coprime 2} can be strengthened to $(d_1,d_2)<p-1$. This has only been totally verified for the primes $p=2,3$ and $5$. 

Now suppose that there are at least three $\mathsf{S}$-normal subgroups. We conjecture that $\mathsf{S}$ comes from automorphisms whenever the restriction to a $\mathsf{S}$-normal subgroup is not the coarsest theory. 

\begin{conj}\label{3 subs}
Let $\mathsf{S}$ be a supercharacter theory of $G$. Suppose that $G$ has at least three nontrivial, proper $\mathsf{S}$-normal subgroups, and let $H$ be one of them. If $\mathsf{S}_H\neq\mathsf{M}(H)$, then every subgroup of $G$ is $\mathsf{S}$-normal. In particular, $\mathsf{S}$ comes from automorphisms. 
\end{conj}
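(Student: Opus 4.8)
The plan is to strip the statement down to a single essential case with the help of Theorem~\ref{main actual}, and then attack that case using the projection data supplied by the three (or more) $\mathsf{S}$-normal subgroups. First I would dispose of the degenerate cases: if $Z(\mathsf{S})=G$ then $\mathsf{S}=\mathsf{m}(G)$, every subgroup is $\mathsf{S}$-normal, and $\mathsf{S}$ comes from the trivial automorphism group, so I may assume $\mathsf{S}\neq\mathsf{m}(G)$. The key observation is that each product theory appearing in Theorem~\ref{main actual} has too few $\mathsf{S}$-normal subgroups to satisfy our hypothesis: a $\ast$-product over a line $N$ has $N$ as its \emph{only} nontrivial proper $\mathsf{S}$-normal subgroup, because every superclass outside $N$ is a union of $N$-cosets and hence too large to be swallowed by another line; and a direct product $\mathsf{S}_H\times\mathsf{S}_N$ has exactly $H$ and $N$, since no diagonal line is a union of superclasses. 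Consequently, the assumption of at least three nontrivial proper $\mathsf{S}$-normal subgroups forces $[G,\mathsf{S}]=G$ and $Z(\mathsf{S})=1$. In particular, no $\mathsf{S}$-normal line $H_i$ can satisfy $\mathsf{S}_{H_i}=\mathsf{m}(H_i)$, for such a line would lie in $Z(\mathsf{S})$.

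Next I would pin down the common local structure. Applying \cite[Theorem A]{SBJH18} to two distinct $\mathsf{S}$-normal lines $H,N$, where $H\cap N=1$ and $HN=G$, yields $\mathsf{S}_H\cong\mathsf{S}_{G/N}$; chaining through a third $\mathsf{S}$-normal line shows that all the restrictions $\mathsf{S}_{H_i}$ are isomorphic. Each is of the form $[H_i]_{m_i}$, determined by the order $d_i$ of $\langle m_i\rangle\le\mathrm{Aut}(H_i)\cong\mathbb{F}_p^\times$ (the subgroup being determined by its order, as $\mathbb{F}_p^\times$ is cyclic), so all $d_i$ equal a common value $d$ corresponding to the unique subgroup $\mu\le\mathbb{F}_p^\times$ of order $d$. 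The previous paragraph gives $d\ge 2$, while the hypothesis $\mathsf{S}_H\neq\mathsf{M}(H)$ gives $d<p-1$, so $\mu$ is proper and nontrivial. Hence, for each $\mathsf{S}$-normal line $H_s$ and each $g\notin H_s$, the image of $\cl_{\mathsf{S}}(g)$ under the projection $\pi_s\colon G\to G/H_s$ is the $\mu$-orbit $\pi_s(g)\mu$, a set of size $d$.

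The heart of the argument is then to show no further line can fail to be $\mathsf{S}$-normal. Suppose a line $H_c$ is not $\mathsf{S}$-normal, pick $g\in H_c\setminus\{1\}$, and set $T=\cl_{\mathsf{S}}(g)$, so that $T\not\subseteq H_c$. Identifying $G=\mathbb{F}_p^2$ and parametrizing lines by slope $s\in\mathbb{P}^1(\mathbb{F}_p)$, let $\Sigma$ denote the set of $\mathsf{S}$-normal slopes, with $|\Sigma|\ge 3$. By the last paragraph every $h\in T$ satisfies $\pi_s(h)\in\pi_s(g)\mu$ for all $s\in\Sigma$; writing $g=(a,b)$ and an element of $T$ as $(a\alpha,b\beta)$ with $\alpha,\beta\in\mu$, the constraint at a finite slope $s$ reads $f(s):=(b\beta-sa\alpha)/(b-sa)\in\mu$, a Möbius function of $s$ that is the constant $\beta$ precisely when $\alpha=\beta$. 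Thus the scalar orbit $\mu g\subseteq H_c$ satisfies every projection constraint, and the goal is to prove $T=\mu g$: this would place $T$ inside $H_c$, contradicting non-normality. Once every line is known to be $\mathsf{S}$-normal, $\cl_{\mathsf{S}}(g)\subseteq\langle g\rangle$ for all $g$, each superclass is exactly a scalar $\mu$-orbit, and $\mathsf{S}$ comes from the scalar subgroup $\langle mI\rangle\le\mathrm{Aut}(G)$ with $\langle m\rangle=\mu$, exactly as in Lemma~\ref{autoproducts}.

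The hard part, which I expect to be the main obstacle, is excluding the off-diagonal elements with $\alpha\neq\beta$. The projection data only forces the nonconstant Möbius map $f$ to land in $\mu$ at the $\ge 3$ slopes of $\Sigma$; since the $\mu$-preimage of a Möbius map has size $d$, three such conditions need not force $\alpha=\beta$ once $d$ is of moderate size. Overcoming this seems to require one of two further inputs: a bootstrapping argument showing that an off-diagonal element of $T$ produces \emph{additional} $\mathsf{S}$-normal lines, thereby enlarging $\Sigma$ until the Möbius constraints over-determine $f$; or the full Schur-ring closure, namely that the superclass sums multiply with nonnegative integer structure constants and tile $G\setminus\{1\}$ consistently, which is strictly stronger than the projection data. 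Duality may provide leverage, since in the reduced case both $\mathsf{S}$ and $\check{\mathsf{S}}$ satisfy the hypotheses, so any constraint established for one transfers to the other. This off-diagonal exclusion is precisely the obstruction that blocks strengthening the coprimality hypothesis $(d_1,d_2)=1$ of Lemma~\ref{coprime 2} to $(d_1,d_2)<p-1$, and it is the genuine crux of the conjecture.
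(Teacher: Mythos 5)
You should first be aware that the statement you were asked to prove appears in the paper as Conjecture~\ref{3 subs}: the authors do not prove it in general. They prove it only when the common restriction $[H]_m$ has $\norm{m}_p\in\{1,2\}$, and for supercharacter theories already known to come from automorphisms (Lemma~\ref{wielandt}); beyond that they reduce it to Conjecture~\ref{reduction}, which they verify computationally for all primes $p\le 47$ via the refinement operator $\mathscr{K}^\infty$. So there is no complete argument in the paper to compare yours against, and your proposal is, as you yourself acknowledge, not a complete proof either.

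That said, your reduction is sound and in places sharper than what is written in the paper. The elimination of the product cases via Theorem~\ref{main actual} (a nontrivial $\ast$-product over a line admits only that line as a supernormal subgroup, and a direct product other than $\mathsf{m}(G)$ admits exactly two), hence $Z(\mathsf{S})=1$ and $[G,\mathsf{S}]=G$; the use of \cite[Theorem A]{SBJH18} through a third supernormal line to force a common order $d$ with $2\le d<p-1$ for all restrictions; and the reformulation of the projection constraints as the requirement that the M\"obius map $f(s)=(b\beta-sa\alpha)/(b-sa)$ take values in the order-$d$ subgroup $\mu$ at every supernormal slope --- all of this is correct and cleanly isolates the difficulty. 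The gap is exactly where you locate it: when $3\le\norm{\Sigma}\le d$, a nonconstant M\"obius map is a bijection of $\mathbb{P}^1(\mathbb{F}_p)$ and so meets $\mu$ in exactly $d$ points, so the projection data alone cannot exclude off-diagonal elements $(a\alpha,b\beta)$ with $\alpha\ne\beta$, and neither of the two escape routes you name is actually carried out. For what it is worth, the second of them is precisely the paper's strategy: Conjecture~\ref{reduction} runs the structure-constant refinement $\mathscr{K}^\infty$ on the partial partition determined by the three lines, which is exactly the ``full Schur-ring closure'' input you identify as necessary, and the proved $d=2$ case is the instance where the identity $\widehat{[xy^{n-1}]}\widehat{[y]}=\widehat{[xy^n]}+\widehat{[xy^{n-2}]}$ makes that bootstrapping trivial. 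In short: your proposal is a correct and useful reduction of the conjecture to its genuine combinatorial core, but it does not prove the statement, which remains open in the paper as well.
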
  

Although a general proof appears to be difficult, this can be proved rather easily in a couple of specific cases.

\begin{lem}
	Let $\mathsf{S}$ be a supercharacter theory of $G$. Suppose that $G$ has at least three nontrivial, proper $\mathsf{S}$-normal subgroups. Let $H$ be $\mathsf{S}$-normal and write $\mathsf{S}_H=[H]_m$. If $\norm{m}_p=1$ or $2$, then Conjecture~$\ref{3 subs}$ holds.
\end{lem}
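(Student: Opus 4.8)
The plan is to show that under either hypothesis the theory $\mathsf{S}$ is forced to coincide with the supercharacter theory coming from the scalar subgroup $\langle mI\rangle\le\mathrm{GL}_2(\mathbb{F}_p)\cong\mathrm{Aut}(G)$; concretely, $\mathsf{S}=\mathsf{m}(G)$ when $\norm{m}_p=1$, and every nonidentity $\mathsf{S}$-class equals $\{g,g^{-1}\}$ (the inversion theory) when $\norm{m}_p=2$. In both cases every subgroup is closed under the relevant scalar action, hence is a union of $\mathsf{S}$-classes, so every subgroup is $\mathsf{S}$-normal and $\mathsf{S}$ comes from automorphisms. I would first record the two translations: $\norm{m}_p=1$ means $\mathsf{S}_H=\mathsf{m}(H)$, i.e.\ $H\le Z(\mathsf{S})$, while $\norm{m}_p=2$ means $\mathsf{S}_H=[H]_{-1}$, since $-1$ is the unique element of order $2$ in $(\mathbb{Z}/p)^\times$ (so here $p$ is necessarily odd).

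For the case $\norm{m}_p=1$, so $H\le Z(\mathsf{S})$: if $Z(\mathsf{S})=G$ then $\mathsf{S}=\mathsf{m}(G)$ and we are done, so I assume $Z(\mathsf{S})=H$ (the only other possibility since $\norm{G:H}=p$). Using that there are at least three $\mathsf{S}$-normal subgroups, I fix two $\mathsf{S}$-normal subgroups $K_1,K_2\ne H$ and write $G=K_1\times H$. Since $H=Z(\mathsf{S})$, the identity $\cl_{\mathsf{S}}(uh)=\cl_{\mathsf{S}}(u)h$ holds for $u\in K_1$, $h\in H$, and $\cl_{\mathsf{S}}(u)=[u]_{m_1}\subseteq K_1$ because $K_1$ is $\mathsf{S}$-normal. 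Applying $\mathsf{S}$-normality of $K_2$ to an element $uh\in K_2$ with $u,h\ne 1$ forces $u^{m_1-1}\in K_1\cap K_2=1$, hence $m_1\equiv 1\ (\mathrm{mod}\ p)$ and $K_1\le Z(\mathsf{S})=H$, a contradiction. Thus $Z(\mathsf{S})=G$ and $\mathsf{S}=\mathsf{m}(G)$.

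For the case $\norm{m}_p=2$ I would first pin down $[G,\mathsf{S}]$ and $Z(\mathsf{S})$. Since $\cl_{\mathsf{S}}(h)=\{h,h^{-1}\}$ generates $H$ (as $p$ is odd), $H\le[G,\mathsf{S}]$, forcing $[G,\mathsf{S}]\in\{H,G\}$; and $H\cap Z(\mathsf{S})=1$ because $H$ has no nonidentity singleton $\mathsf{S}$-classes. If $[G,\mathsf{S}]=H<G$ or $Z(\mathsf{S})>1$, then Theorem~\ref{main actual} makes $\mathsf{S}$ a $\ast$-product or a direct product, and a short count shows a $\ast$-product over a subgroup of order $p$ has exactly one nontrivial proper $\mathsf{S}$-normal subgroup while such a direct product has exactly two, each contradicting the standing hypothesis of at least three. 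Hence $[G,\mathsf{S}]=G$ and $Z(\mathsf{S})=1$, and I again fix two $\mathsf{S}$-normal subgroups $K_1,K_2\ne H$. The heart of the argument is then to show every nonidentity $\mathsf{S}$-class is $\{g,g^{-1}\}$. The image of $\cl_{\mathsf{S}}(h)=\{h,h^{-1}\}$ in $G/K_1$ has size $2$, so the automorphic theory $\mathsf{S}^{G/K_1}$ of the cyclic group $G/K_1$ must be $[G/K_1]_{-1}$; the same holds for $G/K_2$ and, using $\cl_{\mathsf{S}}(k)=\{k,k^{-1}\}$ for $k\in K_1$, for $G/H$. Since $K_2\hookrightarrow G/K_1$ is injective, any $g\in K_2$ has $\norm{\cl_{\mathsf{S}}(g)}=2$, giving $\mathsf{S}_{K_i}=[K_i]_{-1}$. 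Finally, for $g$ outside $K_1\cup K_2\cup H$ I use the isomorphism $G\cong (G/K_1)\times(G/K_2)$: both projections of $\cl_{\mathsf{S}}(g)$ are the two-element inversion orbits, so $\cl_{\mathsf{S}}(g)$ lies in a four-element set $\{g,g^{-1},g',g'^{-1}\}$, and projecting to $G/H$ (also inversion) eliminates $g'$, leaving $\cl_{\mathsf{S}}(g)=\{g,g^{-1}\}$. Thus $\mathsf{S}$ is the inversion theory.

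I expect the main obstacle to be this last step of the $\norm{m}_p=2$ case: ruling out the ``mixed'' four-element classes. The two quotients $G/K_1$ and $G/K_2$ only confine a class to four candidates, and it is precisely the \emph{third} $\mathsf{S}$-normal subgroup $H$ (via $\mathsf{S}^{G/H}$ being inversion) that kills the extra element, which is where the hypothesis of three normal subgroups is genuinely used. A secondary technical point is the bookkeeping in the reduction step, namely verifying that $\ast$- and direct products over subgroups of order $p$ produce at most two nontrivial proper $\mathsf{S}$-normal subgroups; this is what allows Theorem~\ref{main actual} to force $[G,\mathsf{S}]=G$ and $Z(\mathsf{S})=1$.
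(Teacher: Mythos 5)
Your argument is correct, but in both cases it reaches the conclusion by a genuinely different route from the paper. For $\norm{m}_p=1$ the paper transfers the restriction through a third $\mathsf{S}$-normal subgroup $L$: since $\mathsf{S}_K\cong\mathsf{S}_{KL/L}=\mathsf{S}_{G/L}\cong\mathsf{S}_{HL/L}\cong\mathsf{S}_H=\mathsf{m}(H)$, every other $\mathsf{S}$-normal subgroup $K$ also satisfies $\mathsf{S}_K=\mathsf{m}(K)$, whence $G=\inner{H,K}\le Z(\mathsf{S})$; you instead derive a contradiction from the translation identity $\cl_{\mathsf{S}}(uh)=\cl_{\mathsf{S}}(u)h$ by intersecting the set $[u]_{m_1}h$ with a third normal subgroup, which is equally valid. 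For $\norm{m}_p=2$ the paper runs an induction using the class-sum identity $\widehat{[xy^{n-1}]}\,\widehat{[y]}=\widehat{[xy^n]}+\widehat{[xy^{n-2}]}$ to march through the subgroups $\inner{xy^n}$ one at a time, concluding only that every subgroup is $\mathsf{S}$-normal and then invoking Lemma~\ref{autos} for the automorphism statement; you instead pin down $\mathsf{S}$ completely as the inversion theory by intersecting the coset constraints imposed by the three quotients $G/K_1$, $G/K_2$ and $G/H$. Your route buys a stronger conclusion (the exact identification $\mathsf{S}=\mathsf{m}(G)$ or $\mathsf{S}=[G]_{-1}$, with no appeal to Lemma~\ref{autos}), while the paper's class-sum computation is the template it reuses for the $\mathscr{K}^\infty$ arguments later in the same section. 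Two remarks. First, your reduction via Theorem~\ref{main actual} to $[G,\mathsf{S}]=G$ and $Z(\mathsf{S})=1$ is never used in the rest of your argument and can simply be deleted. Second, the one step you must make explicit is the claim that projecting to $G/H$ kills the fourth candidate $g'$: writing $G$ additively as $\mathbb{F}_p^2$ with $K_1$ and $K_2$ the coordinate axes and $g=(u,v)$, $u,v\ne 0$, the fourth candidate is $g'=(-u,v)$, so $g'-g$ is a nonzero element of $K_1$ and $g'+g$ is a nonzero element of $K_2$; since $H$ meets $K_1$ and $K_2$ trivially, neither difference lies in $H$, hence the image of $\{\pm g,\pm g'\}$ in $G/H$ contains the three distinct cosets $g+H$, $-g+H$, $g'+H$ and cannot be a two-element $\mathsf{S}_{G/H}$-class. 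Without some such verification, the sentence ``projecting to $G/H$ eliminates $g'$'' is an assertion rather than a proof, though it is exactly the point you correctly identified as the crux and it does check out.
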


\begin{proof}
	First assume that $\norm{m}_p=1$, and let $K$ be another $\mathsf{S}$-normal subgroup. Then $\mathsf{S}_K=\mathsf{m}(K)$, so $G=\inner{H,K}\le Z(\mathsf{S})$.
	
	Now assume that $\norm{m}_p=2$. We may find distinct nonidentity elements $x,y\in G$such that $\inner{x}$, $\inner{y}$ and $\inner{xy}$ are all $\mathsf{S}$-normal. We show by induction on $n$ that $\inner{xy^n}$ is $\mathsf{S}$-normal for every $n\ge 1$. Let $[g]$ denote the set $\{g,g^{-1}\}$ for $g\in G$. Let $n\ge 2$ be the smallest integer for which $\inner{xy^n}$ is not $\mathsf{S}$-normal. Then $\inner{xy^{n-1}}$ is $\mathsf{S}$-normal, which means that $\widehat{[xy^{n-1}]}\widehat{[y]}$ can be expressed as a nonnegative integer linear combination of $\mathsf{S}$-class sums. Since $\widehat{[xy^{n-1}]}\widehat{[y]}=\widehat{[xy^n]}+\widehat{[xy^{n-2}]}$ and $\inner{xy^{n-2}}$ is $\mathsf{S}$-normal, $[xy^n]$ must be a union of $\mathsf{S}$-classes. So $\inner{[xy^n]}=\inner{xy^n}$ is also $\mathsf{S}$-normal, which contradicts the choice of $n$. Thus $\inner{xy^n}$ is $\mathsf{S}$-normal for every integer $n$, as claimed. 
\end{proof}

Conjecture~\ref{3 subs} also holds in the case that $\mathsf{S}$ comes from automorphisms. 

\begin{lem}\label{wielandt}
	Let $\mathsf{S}$ be a supercharacter theory of $G$ coming from automorphisms. If $G$ has at least three nontrivial, proper $\mathsf{S}$-normal subgroups, then every subgroup of $G$ is $\mathsf{S}$-normal.
\end{lem}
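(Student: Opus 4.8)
The plan is to translate the hypothesis into linear algebra over $\mathbb{F}_p$. Since $G=C_p\times C_p$ is a two-dimensional vector space over $\mathbb{F}_p$, we have $\mathrm{Aut}(G)\cong GL_2(\mathbb{F}_p)$, and a supercharacter theory $\mathsf{S}$ coming from automorphisms is realized by some $A\le GL_2(\mathbb{F}_p)$ whose $\mathsf{S}$-classes are the $A$-orbits on $G$. Under this dictionary the nontrivial proper subgroups of $G$ are precisely the $p+1$ one-dimensional subspaces (lines), and a line $H$ is $\mathsf{S}$-normal exactly when it is a union of $A$-orbits, i.e. when $H$ is invariant under every element of $A$. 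Thus the hypothesis says $A$ has at least three common invariant lines, and the goal is to show $A$ fixes all $p+1$ lines.

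The key step I would isolate is a purely linear-algebraic fact: a non-scalar element $g\in GL_2(\mathbb{F}_p)$ has at most two invariant lines. An invariant line is spanned by an eigenvector of $g$ with eigenvalue in $\mathbb{F}_p$, so I would run through the three cases according to the characteristic polynomial of $g$. If it has two distinct roots in $\mathbb{F}_p$, there are exactly two (one-dimensional) eigenspaces; if it has a repeated root $\lambda\in\mathbb{F}_p$ but $g$ is non-scalar, then $g-\lambda I$ is a nonzero nilpotent of rank one, giving a single invariant line; and if the characteristic polynomial is irreducible over $\mathbb{F}_p$, there are none. In every non-scalar case there are at most two invariant lines.

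Combining these finishes the argument quickly. Since $A$ has at least three common invariant lines, every $g\in A$ fixes at least three lines, so by the key step each $g\in A$ must be scalar. But a scalar matrix fixes every line, hence every one-dimensional subspace of $G$ is $A$-invariant, i.e. $\mathsf{S}$-normal. As the only subgroups of $G$ are $1$, $G$, and the $p+1$ lines, all subgroups are $\mathsf{S}$-normal, as desired.

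I expect no serious obstacle here; the only care required is in the case analysis of the key step—especially not overlooking the non-diagonalizable repeated-eigenvalue case—and in correctly matching invariant lines with $\mathsf{S}$-normal subgroups. The lemma name suggests one could instead phrase the key step via the sharply $3$-transitive action of $PGL_2(\mathbb{F}_p)$ on $\mathbb{P}^1(\mathbb{F}_p)$, where a nonidentity M\"obius transformation fixes at most two points; this is an equivalent and perhaps more memorable route to the same conclusion.
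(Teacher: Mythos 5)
Your proof is correct and follows essentially the same route as the paper: both arguments show that an automorphism stabilizing three distinct lines of $G\cong\mathbb{F}_p^2$ must be scalar (central in $\mathrm{Aut}(G)$) and therefore stabilizes every line. The paper does this by a direct computation with generators $x$, $y$, $xy$ of the three $\mathsf{S}$-normal subgroups, which amounts to the diagonalizable case of your characteristic-polynomial analysis.
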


\begin{proof}
	Suppose that $\mathsf{S}$ comes from $A\le \mathrm{Aut}(G)$. Let $H_i$, $i=1,2,3$, be $\mathsf{S}$-normal of order $p$. We may assume that $H_1$ is generated by $x$, $H_2$ is generated by $y$ and $H_3$ is generated by $xy$. Let $a\in A$. Since $H_1$ and $H_2$ are $\mathsf{S}$-normal, there exist integers $i$ and $j$ so that $x^a=x^i$ and $y^a=y^j$. Then $(xy)^a=x^iy^j$. Since $H_3$ is $\mathsf{S}$-normal, $(xy)^a\in\inner{xy}$, which forces $i=j$. We conclude that $a\in Z(\mathrm{Aut}(G))$ and hence fixes every subgroup of $G$.
\end{proof}

We remark that Lemma~\ref{wielandt} also follows from \cite[Lemma 26.3]{HW64}, a result about Schur rings.

We now outline a strategy we believe will work to prove Conjecture~\ref{3 subs}, although the actual proof has evaded us. This strategy involves an algorithm of the first author appearing in \cite{sct meets}, which we now describe. We begin by defining an equivalence relation on $G$ coming from a partial partition of $G$. Let $\mathcal{C}$ be a $G$-invariant partial partition of $G$. For $K,L\in\mathcal{C}$ and $g\in G$, define
\[(K,L)_g=\norm{\{(k,l)\in K\times L:kl=g\}}.\]Also recall that for $K\subseteq G$, $\hat{K}$ denotes the element $\sum_{g\in K}g$ of the group algebra $\mathbb{Z}(G)$. 

Define an equivalence relation $\sim$ on $G$ by defining $g\sim h$ if and only if 
\[(K,L)_g=(K,L)_h\]
for all $K,L\in\mathcal{C}$. Let $\mathscr{K}(\mathcal{C})$ denote the set of equivalence classes of $\mathrm{Irr}(G)$ under $\sim$. It is shown in \cite{sct meets} that $\mathscr{K}(\mathcal{C})$ is a refinement of $\mathcal{C}$ and that $\mathscr{K}(\mathcal{C})=\mathcal{C}$ if and only if $\mathcal{C}$ is the set of superclasses for a supercharacter theory $\mathsf{S}$ of $G$. If the partition $\mathrm{Cl}(\mathsf{S})$ of $G$ corresponding to the supercharacter theory $\mathsf{S}$ of $G$ is a refinement of $\mathcal{C}$, then $\mathrm{Cl}(\mathsf{S})$ is a refinement of $\mathscr{K}(\mathcal{C})$. Iterating on this process, the sequence $\mathcal{C}, \mathscr{K}(\mathcal{C}), \mathscr{K}^2(\mathcal{C}),\dotsc$ eventually terminates; call its terminal member $\mathscr{K}^\infty(\mathcal{C})$. Then $\mathscr{K}^\infty(\mathcal{C})$ is the set of superclasses for a supercharacter theory $\mathsf{T}$ of $G$, and $\mathsf{S}\preccurlyeq\mathsf{T}$.

Let $\mathsf{S}$ be a supercharacter theory of $G$, and assume that $H_1,H_2,H_3$ are distinct nontrivial, proper $\mathsf{S}$-normal subgroups. Assume further that $\mathsf{S}_{H_1}\ne\mathsf{M}(H_1)$. Let $r$ be a primitive root modulo $p$. If $\mathsf{S}_{H_1}=[H_1]_m$, then $[H_1]_m\preccurlyeq[H_1]_{r^2}$, since $\mathsf{S}_{H_1}\ne\mathsf{M}(H_1)$. In particular, $\mathrm{Cl}(\mathsf{S})$ is a refinement of the partition $\bigcup_{i=1}^3[H_i]_m\cup\{G\setminus\bigcup_{i=1}^3H_i\}$, which is a refinement of the partition $\mathcal{C}=\bigcup_{i=1}^3[H_i]_{r^2}\cup\{G\setminus\bigcup_{i=1}^3H_i\}$. If $g\in G\setminus\bigcup_{i=1}^nH_i$ such that $[g]_{r^2}\in\mathscr{K}^\infty(\mathcal{C})$, then $[g]_{r^2}$ must be a union of $\mathsf{S}$-classes. Thus $\inner{[g]_{r^2}}=\inner{g}$ is another $\mathsf{S}$-normal subgroup. 

\begin{conj}\label{reduction}
Let $H_1,H_2$ and $H_3$ be distinct subgroups of $G$ of order $p$. Let $r$ be a primitive root modulo $p$. Define $\mathcal{C}=\bigcup_{i=1}^3[H_i]_{r^2}\cup\{G\setminus\bigcup_{i=1}^3H_i\}$. For every $g\in G\setminus\bigcup_iH_i$, the set $[g]_{r^2}$ lies in $\mathscr{K}^\infty(\mathcal{C})$.
\end{conj}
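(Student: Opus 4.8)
The plan is to recognize $\mathscr{K}^{\infty}(\mathcal{C})$ as one explicit supercharacter theory. By \cite{sct meets}, $\mathscr{K}^{\infty}(\mathcal{C})$ is the coarsest supercharacter theory of $G$ whose superclasses refine $\mathcal{C}$. Writing $R_i,S_i$ for the two $[\,\cdot\,]_{r^2}$-orbits comprising $H_i\setminus 1$, so that $\mathcal{C}=\{\{1\},R_1,S_1,R_2,S_2,R_3,S_3,B\}$ with $B=G\setminus\bigcup_{i=1}^3H_i$, let $\mathsf{T}_0$ be the supercharacter theory coming from the group $A\le\mathrm{Aut}(G)$ of scalar automorphisms $v\mapsto r^2v$. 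The $A$-orbits are exactly $\{1\}$ and the sets $[g]_{r^2}$, so $\mathrm{Cl}(\mathsf{T}_0)=\{\{1\}\}\cup\{[g]_{r^2}:g\neq 1\}$. Together with the ``no splitting'' step below, the assertion $[g]_{r^2}\in\mathscr{K}^{\infty}(\mathcal{C})$ for all $g\in B$ is equivalent to the single identity $\mathscr{K}^{\infty}(\mathcal{C})=\mathsf{T}_0$, and it is this identity I would aim to prove.

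First I would dispose of the easy inclusion. Every block of $\mathcal{C}$ is a union of $\mathsf{T}_0$-classes: $R_i$ and $S_i$ are each a single $[\,\cdot\,]_{r^2}$-orbit, and $B$ is a union of such orbits. Hence $\mathrm{Cl}(\mathsf{T}_0)$ refines $\mathcal{C}$, so $\mathsf{T}_0\preccurlyeq\mathscr{K}^{\infty}(\mathcal{C})$. In particular every $\mathscr{K}^{\infty}(\mathcal{C})$-block is a union of orbits $[g]_{r^2}$, no orbit is split, each $H_i$ is $\mathscr{K}^{\infty}(\mathcal{C})$-normal, and $R_i,S_i$ survive as blocks. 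What remains is the reverse inclusion $\mathscr{K}^{\infty}(\mathcal{C})\preccurlyeq\mathsf{T}_0$, i.e. that no two distinct orbits inside $B$ are merged. Since $G$ is abelian this is a statement about the Schur ring $\mathcal{A}\subseteq\mathbb{Z}(G)$ generated by $\widehat{R_1},\widehat{S_1},\dots,\widehat{R_3},\widehat{S_3}$, whose $0$--$1$ basis is $\mathscr{K}^{\infty}(\mathcal{C})$: one has $\mathcal{A}\subseteq\mathcal{A}_0$, where $\mathcal{A}_0$ is the Schur ring of $\mathsf{T}_0$, and $\dim\mathcal{A}_0=2p+3$ (the number of $\mathsf{T}_0$-classes). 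The goal becomes the dimension count $\dim\mathcal{A}=2p+3$.

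To establish this I would show that every subgroup of order $p$ becomes $\mathscr{K}^{\infty}(\mathcal{C})$-normal and splits into its two arcs; by the no-splitting step this forces $\mathcal{A}=\mathcal{A}_0$. This is a slope-propagation argument extending the treatment of the case $\norm{m}_p=2$ given earlier in this section. Identify $G$ with $\mathbb{F}_p^2$ so that $H_1,H_2,H_3$ are the lines of slopes $0,\infty,1$; the subgroups of order $p$ are then the lines of slope $c\in\mathbb{F}_p\cup\{\infty\}$, each split by $\mathsf{T}_0$ into a quadratic-residue arc and a nonresidue arc. A direct computation shows that if $O$ is a resolved arc on the line of slope $c$, then $\widehat{O}\,\widehat{R_2}$ and $\widehat{O}\,\widehat{S_2}$ are $0$--$1$ class sums, each supported on arcs of a single sign indexed by the slopes $c'$ for which $c'-c$ lies in a prescribed coset of the quadratic residues. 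Forming Hadamard products and integer combinations of such sums built from all three base lines $H_1,H_2,H_3$, I would isolate the individual arcs on the remaining lines, adjoining them to the collection of resolved subgroups until all $p+1$ lines are resolved.

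The crux---and the reason this has resisted proof---is the isolation step. When $\norm{r^2}_p=2$, which occurs exactly for $p=3$ (where $r^2\equiv 1$, $\mathsf{T}_0=\mathsf{m}(G)$, and the statement is immediate) and $p=5$ (where $r^2\equiv -1$), the arcs have size at most $2$, the products above are two-term sums, and the isolation is the clean induction already in the paper. For $p\ge 7$ the arcs have size $(p-1)/2>2$, the products are long unions of arcs, and their resolution into single orbit sums is governed by quadratic Jacobi-sum counts over $\mathbb{F}_p$; the difficulty is to prove these counts never coincide in a way that keeps two $B$-orbits indistinguishable through the whole iteration, equivalently that $\mathcal{A}$ attains dimension $2p+3$ rather than stabilizing at a proper coarser closure. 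Any such proper closure must be non-automorphic, since $\mathsf{T}_0$ is the only supercharacter theory coming from automorphisms that refines $\mathcal{C}$: an automorphism preserving the block $R_1$ restricts to a quadratic-residue scalar on $H_1$, and an automorphism fixing $H_1,H_2,H_3$ setwise is scalar, so the two conditions force the automorphism group into $A$ (consistent with Lemma~\ref{wielandt}, which already gives $\mathscr{K}^{\infty}(\mathcal{C})=\mathsf{T}_0$ as soon as $\mathscr{K}^{\infty}(\mathcal{C})$ is known to be automorphic). Thus the entire difficulty lies in excluding exotic, non-automorphic coarsenings---the same obstruction that keeps the general classification open.
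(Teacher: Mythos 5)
This statement is a \emph{conjecture} in the paper, not a theorem: the authors say explicitly that a proof has evaded them, and the only evidence offered is a machine verification for all primes $p\le 47$ together with hand-worked instances of the propagation argument for $p=5,7,11$. So there is no proof in the paper to compare against, and the question is whether your proposal supplies one. It does not. The parts you do establish are correct and consistent with the paper's own framing: since $\mathrm{Cl}(\mathsf{T}_0)$ refines $\mathcal{C}$, the algorithm of \cite{sct meets} gives $\mathsf{T}_0\preccurlyeq\mathscr{K}^\infty(\mathcal{C})$, so no orbit $[g]_{r^2}$ is ever split, the arcs $R_i,S_i$ survive as blocks, and the conjecture is equivalent to the single equality $\mathscr{K}^\infty(\mathcal{C})=\mathsf{T}_0$, i.e.\ to the assertion that the Schur ring generated by the six arc sums $\widehat{R_i},\widehat{S_i}$ attains the full dimension $2p+3$. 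Your observation that no \emph{automorphic} proper coarsening can occur is essentially Lemma~\ref{wielandt}.

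The gap is that the entire content of the conjecture lives in the step you call the ``isolation step,'' and for that step you give a description of the difficulty rather than an argument. For $p\ge 7$ the products $\widehat{O}\,\widehat{R_j}$ are supported on unions of arcs of size $(p-1)/2$ with multiplicities governed by quadratic character (Jacobi-type) sums, and nothing in the proposal rules out the possibility that these multiplicities coincide at every stage of the iteration so as to leave two distinct orbits in $B$ fused in a non-automorphic closure --- which is precisely the possibility the conjecture denies, and which you concede in your final paragraph is unresolved. The cases $p=3$ and $p=5$ are indeed immediate (there $\norm{r^2}_p\le 2$ and the lemma already proved in Section~\ref{S normal}, combined with Lemma~\ref{autos}, applies), but for general $p$ the proposal amounts to a correct reformulation of the conjecture in Schur-ring language together with the same plan the paper outlines and verifies computationally; it is not a proof.
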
 

We now illustrate a couple of small examples illustrating the statement of Conjecture~\ref{reduction}.

\begin{exam}
Let $G=\inner{x,y}$ be the group $C_5\times C_5$. Suppose that $\mathsf{S}$ is a supercharacter theory of $G$ with at least three supernormal subgroups $H_1$, $H_2$ and $H_3$. Relabeling if necessary, we may write $H_1=\inner{x}$, $H_2=\inner{y}$ and $H_3=\inner{xy}$. If $\mathsf{S}_{H_1}\ne\mathsf{M}(H_1)$, then $\mathsf{S}_{H_1}$ comes from the inversion automorphism. So $[x]_4$ and $[y]_4$ are $\mathsf{S}$-classes. So 
\[\widehat{[x]_4}\widehat{[y]_4}=\widehat{[xy]_4}+\widehat{[xy^{-1}]_4}\]
is a sum of $\mathsf{S}$-class sums. Since $\inner{xy}$ is $\mathsf{S}$-normal, it follows that $L_1=[xy^{-1}]_4$ is a union of $\mathsf{S}$-classes. Thus so is $L_1^i=[x^iy^{-i}]_4$ for each $2\le i\le p-1$. Since $\inner{xy^{-1}}=\bigcup_iL_1^i$, we deduce that $\inner{xy^{-1}}$ is also $\mathsf{S}$-normal.

Similarly, $[y]_4$ and $[xy]_4$ are $\mathsf{S}$-classes. Thus
\[\widehat{[y]_4}\widehat{[xy]_4}=\widehat{[x]_4}+\widehat{[xy^2]_4}\]
is a union of $\mathsf{S}$-classes. Since $\inner{x}$ is $\mathsf{S}$-normal, $[xy^2]_4^i=\{x^iy^{2i},x^{-i}y^{-2i}\}$ is a union of $\mathsf{S}$-classes for each $2\le i\le p-1$. From this, we conclude that $\inner{xy^2}$ is also $\mathsf{S}$-normal.

Using a similar technique with the classes $[y^2]_4$ and $[xy]_4$, we can also show that $\inner{xy^3}$ is $\mathsf{S}$-normal. \hfill$\square$
\end{exam}

\begin{exam}
	Let $G=\inner{x,y}$ be the group $C_7\times C_7$. Suppose that $\mathsf{S}$ is a supercharacter theory of $G$ with at least three supernormal subgroups. As with the previous example, we may assume $H_1=\inner{x}$, $H_2=\inner{y}$ and $H_3=\inner{xy}$ are $\mathsf{S}$-normal. Suppose that $\mathsf{S}_{H_1}$ comes from the squaring automorphism. Then $[x]_2=\{x,x^2,x^4\}$ and $[y]_2=\{y,y^2,y^4\}$ are $\mathsf{S}$-classes. So
	\[\widehat{[x]_2}\widehat{[y]_2}=\widehat{[xy]_2}+\widehat{[xy^2]_2}+\widehat{[xy^4]_2}
	\]
is a sum of $\mathsf{S}$-classes. Since $\inner{xy}$ is $\mathsf{S}$-normal, $L_1=[xy^2]_2\cup[xy^4]_2$ is a union of $\mathsf{S}$-classes. Similarly, using the $\mathsf{S}$-classes of $y^{-1}$ and $xy$, we see that $L_2=[xy^4]_2\cup[xy^{-1}]_2$ is a union of $\mathsf{S}$-classes. So $L=L_1\cap L_2=[xy^4]_2$ is a union of $\mathsf{S}$-classes. Thus we may conclude that $\inner{xy^4}, \inner{xy^2}$ and $\inner{xy^{-1}}$ are also $\mathsf{S}$-normal.

One may easily verify that a similar process shows that every subgroup of $G$ must be $\mathsf{S}$-normal.\hfill$\square$
\end{exam}

\begin{exam}
	Let $G=\inner{x,y}$ be the group $C_{11}\times C_{11}$. Suppose that $\mathsf{S}$ is a supercharacter theory of $G$ with at least three supernormal subgroups $H_1,H_2$ and $H_3$, and suppose that $\mathsf{S}_{H_1}=[H_1]_3$. As with the previous example, we may assume $H_1=\inner{x}$, $H_2=\inner{y}$ and $H_3=\inner{xy}$. Let $\mathcal{C}$ be the partition 
	\[\mathcal{C}=\{\{1\},[x]_3,[x^2]_3,[y]_3,[y^2]_3,[xy]_3,[x^2y^2]_3,G\setminus(\inner{x}\cup\inner{y}\cup\inner{xy})\}\]
	 of $G$. Note that $\mathsf{S}$ is a refinement of $\mathcal{C}$. It is routine to show that
	 \begin{align*}
	 	\mathscr{K}(\mathcal{C})=\{&\{1\},[x]_3,[x^2]_3,[y]_3,[y^2]_3,[xy]_3,[x^2y^2]_3,[xy^3]_3\cup[xy^9]_3\}\\
		&\cup \{[xy^4]_3\cup[xy^5]_3,[xy^7]_3\cup[xy^8]_3, [x^2y^3]_3\cup[x^2y^5]_3,[x^2y^6]_3\cup[x^2y^7]_3\}\\
		&\cup \{[x^2y^8]_3\cup[x^2y^{10}]_3,[xy^2]_3\cup [xy^6]_3\cup [xy^{10}]_3, [x^2y]_3\cup [x^2y^4]_3\cup [x^2y^9]_3\}
	\end{align*}
	and that
	\begin{align*}
		(\widehat{[xy^3]_3}+\widehat{[xy^9]_3})(\widehat{[xy^4]_3}+\widehat{[xy^5]_3})&=
		\widehat{[xy]_3}+3\widehat{[x^2y^2]_3}+\bigl(\widehat{[x^2y^6]_3}+\widehat{[x^2y^7]_3}\bigr)\\
		&+\bigl(\widehat{[x^2y^8]_3}+\widehat{[x^2y^{10}]_3}\bigr)+\widehat{[xy^5]_3}+2\widehat{[xy^6]_3}\\
		&+2\widehat{[xy^8]_3}+\widehat{[xy^9]_3}+\widehat{[xy^{10}]_3}+2\widehat{[x^2y^3]_3}\\
		&+2\widehat{[x^2y^4]_3}+\widehat{[x^2y^9]_3}.
	\end{align*}
	
	Since $\widehat{[xy^4]_3}$ does not appear in the above decomposition and $\mathsf{S}$ is a refinement of $\mathscr{K}^2(\mathcal{C})$, we conclude that $\inner{xy^4}$ and $\inner{xy^5}$ are $\mathsf{S}$-normal. Similary, $\inner{xy^7}$ and $\inner{xy^8}$ are $\mathsf{S}$-normal. The same reasoning also shows that $\inner{xy^3}$ and $\inner{xy^9}$ are $\mathsf{S}$-normal.
	
	Since $\widehat{[xy^2]_3}$ does not appear in the above decomposition, and since $\widehat{[xy^6]_3}$ and $\widehat{[xy^{10}]_3}$ appear with different coefficients, we see that $\inner{xy^2}$, $\inner{xy^6}$ and $\inner{xy^{10}}$ are also $\mathsf{S}$-normal. Thus every subgroup of $G$ is $\mathsf{S}$-normal.
\end{exam}

We have verified Conjecture~\ref{reduction} for every prime $p\le 47$ using the method outlined above, thereby also verifying Conjecture~\ref{3 subs} in the process. In the event that Conjecture~\ref{3 subs} holds, we can conclude that a large collection of supercharacter theories of $G$ must come from automorphisms.

\begin{lem}\label{autos}
Let $\mathsf{S}$ be a supercharacter theory of $G$. If every subgroup of $G$ is $\mathsf{S}$-normal, then $\mathsf{S}$ comes from automorphisms. 
\end{lem}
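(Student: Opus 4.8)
The plan is to prove the sharper statement that $\mathsf{S}$ comes from a group of \emph{scalar} automorphisms of $G\cong\mathbb{F}_p^2$; the work is to see that the hypothesis forces all the restrictions $\mathsf{S}_{H_i}$ to be governed by one common group of power maps. First I would fix the set-up: identify $G$ with $\mathbb{F}_p^2$, so that the nontrivial proper subgroups $H_1,\dots,H_{p+1}$ are exactly the lines through the origin. Since every subgroup is $\mathsf{S}$-normal, each $\inner{g}$ is a union of $\mathsf{S}$-classes, so $\cl_{\mathsf{S}}(g)\subseteq\inner{g}$ for every $g$; thus each $\mathsf{S}$-class lies inside a single line, and on each line the induced theory is $\mathsf{S}_{H_i}=[H_i]_{m_i}$ for some integer $m_i$. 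Writing $D_i=\inner{m_i}\le\mathbb{F}_p^\times$ for the subgroup generated by $m_i$, this says $\cl_{\mathsf{S}}(g)=\{sg:s\in D_i\}$ whenever $g$ spans $H_i$.

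The crux is to show that all the subgroups $D_i$ coincide. For this I would use that, because $\mathsf{S}$ is a supercharacter theory, the class sums span a ring: each product $\widehat{\cl_{\mathsf{S}}(u)}\,\widehat{\cl_{\mathsf{S}}(v)}$ is a nonnegative integer combination of class sums, so its support is a union of $\mathsf{S}$-classes. Given three distinct lines $H_a,H_b,H_c$, choose a basis $u,v$ with $H_a=\inner{u}$, $H_b=\inner{v}$, and $H_c=\inner{u+v}$ (possible after rescaling, since $H_c$ meets neither axis). Taking the product of the class sums of $u$ and $v$, whose support is the off-axis set $D_a\times D_b$, and noting that $u+v$ lies in this support, the class $\{s(u+v):s\in D_c\}$ of $u+v$ must be contained in $D_a\times D_b$; this gives $D_c\subseteq D_a\cap D_b$. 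Permuting which of the three lines plays the role of the diagonal yields $D_a\subseteq D_b\cap D_c$ and $D_b\subseteq D_a\cap D_c$ as well, and together these force $D_a=D_b=D_c$. Since there are at least three lines, any two of them lie in a common triple, so all the $D_i$ are equal to a single subgroup $D\le\mathbb{F}_p^\times$.

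Finally, with $\cl_{\mathsf{S}}(g)=\{sg:s\in D\}$ for every nonidentity $g$, I would observe that these are precisely the orbits on $G$ of the scalar automorphism group $A=\{\,g\mapsto sg:s\in D\,\}\le\mathrm{Aut}(G)$, together with the fixed point $\{1\}$. By the discussion of automorphic supercharacter theories in Section~\ref{prelim}, the orbits of $A$ on $G$ and on $\irr(G)$ form a supercharacter theory, and since it has the same superclasses as $\mathsf{S}$ and superclasses determine the theory \cite[Theorem 2.2 (c)]{ID07}, we conclude that $\mathsf{S}$ comes from $A$. The main obstacle is the middle paragraph: extracting the equalities $D_i=D_j$ from the ring-closure of the class sums, where the care lies in verifying that the base change normalizing three lines to axes-and-diagonal does not disturb the intrinsic power-map data $D_i$ attached to each line.
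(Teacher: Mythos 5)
Your proof is correct, but it takes a genuinely different route from the paper's. Both arguments begin the same way: since every line $H_i$ is $\mathsf{S}$-normal, each superclass of a nonidentity element is contained in the unique line through it, and each restriction $\mathsf{S}_{H_i}$ is a power-map theory $[H_i]_{m_i}$. The divergence is in how one shows the power-map data agree across lines. The paper transports the data through quotients: by the Jordan--H\"older-type compatibility \cite[Theorem A]{SBJH18}, the $\mathsf{S}_{H_1}$-classes map onto the $\mathsf{S}_{G/H_3}$-classes under the canonical isomorphism $H_1\to G/H_3$, and likewise for $H_2\to G/H_3$, so $\mathsf{S}_{H_1}$ and $\mathsf{S}_{H_2}$ come from the same power $m$ because the canonical isomorphisms commute with power maps. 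You instead stay inside the group algebra: writing $D_i\le\mathbb{F}_p^\times$ for the group of scalars acting on the line $H_i$, you expand $\widehat{\cl_{\mathsf{S}}(u)}\,\widehat{\cl_{\mathsf{S}}(v)}$ for a basis adapted to three lines in axes-plus-diagonal position, observe its support $\{su+tv:s\in D_a,\ t\in D_b\}$ is a union of superclasses containing $u+v$, and deduce $D_c\subseteq D_a\cap D_b$; cycling the roles of the three lines forces $D_a=D_b=D_c$. Your worry about the base change is unfounded, since $D_i$ is intrinsic to the line ($\cl_{\mathsf{S}}(h)=D_ih$ for \emph{every} nonidentity $h\in H_i$, independent of the chosen generator). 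Both arguments use that $p+1\ge 3$ lines exist and both finish identically, by matching the superclasses against the orbits of the scalar group and invoking the uniqueness of a supercharacter theory given its classes. What your version buys is self-containedness --- it needs only the closure of superclass sums under multiplication, not the cited quotient-restriction theorem; what the paper's version buys is brevity, given that the machinery of $\mathsf{S}_{HN/N}$ versus $\mathsf{S}_{H/(H\cap N)}$ is already in place from Section~\ref{prelim}.
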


\begin{proof}
Let $H_1$, $H_2$ and $H_3$ be distinct subgroups of $G$. Suppose that $\mathsf{S}_{H_1}$ comes from the automorphism that sends an element to its $m^{\rm th}$ power. Let $\phi_{12}:H_1\to G/H_2$, $\phi_{13}:H_1\to G/H_3$ and $\phi_{23}:H_2\to G/H_3$ be the canonical isomorphisms. The $\mathsf{S}_{G/H_2}$-classes are the images of the $\mathsf{S}_{H_1}$-classes under $\phi_{12}$ and the $\mathsf{S}_{G/H_3}$-classes are the images of the $\mathsf{S}_{H_2}$-classes under $\phi_{23}$. So the $\mathsf{S}_{H_2}$-classes are the images of the $\mathsf{S}_{H_1}$-classes under $\phi_{23}^{-1}\phi_{12}$. Thus $\mathsf{S}_{H_2}$ also comes from the automorphism that sends an element to its $m^{\rm th}$ power. Since $H_2$ was chosen randomly, it follows that the $\mathsf{S}$-class of an element $g\in G$ is just $\{g,g^m,g^{m^2},\dotsc\}$. So $\mathsf{S}$ comes from automorphisms, as claimed.
\end{proof}

We have observed in small cases ($p=2,3,5$) that every supercharacter theory either comes from automorphisms or is a partition supercharacter theory. Due the large number of conjugacy classes in $C_7\times C_7$, it is unfortunately rather difficult computationally to verify the observation in this case. It is known (e.g. by \cite[Theorem 2.2 (f)]{ID07}) that if $e$ is an integer coprime to $p$, then $K^e=\{g^e:g\in G\}$ is an $\mathsf{S}$-class for every $\mathsf{S}$-class $K$. In every example we have computed, a very special choice of $e$ actually fixes every $\mathsf{S}$-class as long as $G$ has at least three nontrivial, proper $\mathsf{S}$-normal subgroups. Thus the following conjecture seems reasonable, and would allow us to greatly reduce the possible structure of supercharacter theories of $G$.

\begin{conj}\label{coarse theories}
	Let $\mathsf{S}$ be a supercharacter theory of $G$ with at least three nontrivial, proper $\mathsf{S}$-normal subgroups. Let $H$ be a nontrivial $\mathsf{S}$-normal subgroup and write $\mathsf{S}_H=[H]_m$. Let $\mathsf{T}$ be the supercharacter theory of $G$ coming from the automorphism sending an element to its $m^{\rm th}$ power. Then $\mathsf{T}\preccurlyeq\mathsf{S}$. 
\end{conj}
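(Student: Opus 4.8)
The plan is to recast the assertion $\mathsf{T}\preccurlyeq\mathsf{S}$ as a statement about which power maps preserve the $\mathsf{S}$-classes. By \cite[Theorem 2.2 (f)]{ID07} every integer $e$ coprime to $p$ permutes $\mathrm{Cl}(\mathsf{S})$, so the set $D=\{e:\ g^e\in\cl_{\mathsf{S}}(g)\text{ for all }g\in G\}$ of power maps fixing each $\mathsf{S}$-class setwise is the kernel of the action of $(\mathbb{Z}/p\mathbb{Z})^\times$ on $\mathrm{Cl}(\mathsf{S})$, hence a subgroup of $(\mathbb{Z}/p\mathbb{Z})^\times$. Since the $\mathsf{T}$-classes are exactly the sets $[g]_m$, the relation $\mathsf{T}\preccurlyeq\mathsf{S}$ — every $\mathsf{S}$-class is a union of $\mathsf{T}$-classes — holds if and only if $[g]_m\subseteq\cl_{\mathsf{S}}(g)$ for every $g$, that is, if and only if $m\in D$. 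Restricting any $e\in D$ to the $\mathsf{S}$-normal subgroup $H$ shows that $D$ fixes every $\mathsf{S}_H=[H]_m$-class, so $D\le\inner{m}$ automatically. Thus the conjecture is equivalent to the reverse inclusion $\inner{m}\le D$, i.e. to the single fact that the power-$m$ map fixes every $\mathsf{S}$-class of $G$.

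First I would record that the exponent $m$ is essentially canonical. Label three of the nontrivial proper $\mathsf{S}$-normal subgroups $H_1,H_2,H_3$, each cyclic of order $p$, and write $\mathsf{S}_{H_i}=[H_i]_{m_i}$. For $i\ne j$ we have $G=H_i\times H_j$, so by \cite[Theorem A]{SBJH18} the canonical isomorphism $\varphi_{ij}:H_i\to G/H_j$ carries $\mathsf{S}_{H_i}$ onto $\mathsf{S}_{G/H_j}$. Since a supercharacter theory of $C_p$ is determined by its class-fixing subgroup of $(\mathbb{Z}/p\mathbb{Z})^\times$ and $\varphi_{ij}$ preserves that subgroup, comparing the two descriptions of $\mathsf{S}_{G/H_k}$ arising from the two indices $i\ne k$ yields $\inner{m_1}=\inner{m_2}=\inner{m_3}=:\inner{m}$; moreover each quotient $\mathsf{S}^{G/H_i}$ has the same class-fixing group $\inner{m}$. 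In particular the truth of the statement does not depend on which $\mathsf{S}$-normal subgroup is used to define $m$.

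I would then split on whether $\mathsf{S}_H=\mathsf{M}(H)$. If $\inner{m}$ is a proper subgroup of $(\mathbb{Z}/p\mathbb{Z})^\times$ (equivalently $\mathsf{S}_H\ne\mathsf{M}(H)$), then Conjecture~\ref{3 subs} gives that every subgroup of $G$ is $\mathsf{S}$-normal, whence the computation in the proof of Lemma~\ref{autos} shows $\cl_{\mathsf{S}}(g)=[g]_m$ for all $g$; thus $\mathsf{S}=\mathsf{T}$ and $\mathsf{T}\preccurlyeq\mathsf{S}$ holds trivially. If instead $\mathsf{S}_H=\mathsf{M}(H)$, then $m$ is a primitive root, $\mathsf{T}$ is the finest partition supercharacter theory $\mathsf{S}_{\mathcal{P}}$, and by Lemma~\ref{interval} the conjecture reduces to showing that $\mathsf{S}$ is itself a partition supercharacter theory, i.e. that $\inner{g}\setminus\{1\}\subseteq\cl_{\mathsf{S}}(g)$ for every $g$. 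In both cases the substance is identical: for $g\notin H_1\cup H_2\cup H_3$ one must prove $g^m\in\cl_{\mathsf{S}}(g)$.

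I expect this last step to be the main obstacle. The quotient data supply it only up to the subgroups $H_i$: because $\mathsf{S}^{G/H_i}$ has class-fixing group $\inner{m}$, the image of $\cl_{\mathsf{S}}(g)$ in $G/H_i$ is fixed by power-$m$, so $\cl_{\mathsf{S}}(g)$ contains, for each $i$, an element congruent to $g^m$ modulo $H_i$; passing from these three approximants to $g^m$ itself is exactly the difficulty. The natural route is the structure-constant/meet-algorithm argument of Conjecture~\ref{reduction}: with a seed partition $\mathcal{C}$ adapted to the relevant restriction (for $\mathsf{S}_H\ne\mathsf{M}(H)$ one takes $\mathcal{C}=\bigcup_i[H_i]_{r^2}\cup\{G\setminus\bigcup_iH_i\}$, and a coarser seed in the $\mathsf{M}$ case), one knows $\mathrm{Cl}(\mathsf{S})$ refines $\mathscr{K}^\infty(\mathcal{C})$, and products of class sums $\widehat{[u]}\,\widehat{[v]}$ expanded in the $\mathscr{K}(\mathcal{C})$-basis isolate the individual pieces $[g]_{r^2}$, and hence $[g]_m$, as unions of $\mathsf{S}$-classes. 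Carrying this out uniformly in $p$ — rather than prime-by-prime, as in the worked examples and the verification for $p\le 47$ — is the crux: it amounts to controlling the multiplicities $(K,L)_g$ well enough to guarantee that each such block survives to $\mathscr{K}^\infty(\mathcal{C})$, which is precisely the content of the still-open Conjecture~\ref{reduction}.
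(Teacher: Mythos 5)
The statement you are addressing is Conjecture~\ref{coarse theories}: the paper offers no proof of it, only computational evidence, so there is no argument of the authors to compare yours against. Your preliminary reductions are sound and worth recording: the power maps fixing every $\mathsf{S}$-class form a subgroup $D$ of $(\mathbb{Z}/p\mathbb{Z})^\times$; the conclusion $\mathsf{T}\preccurlyeq\mathsf{S}$ is equivalent to $m\in D$; the generators $m_i$ attached to the various $\mathsf{S}$-normal subgroups all generate the same subgroup of $(\mathbb{Z}/p\mathbb{Z})^\times$ (essentially the computation in Lemma~\ref{autos}); and the two cases $\mathsf{S}_H\ne\mathsf{M}(H)$ and $\mathsf{S}_H=\mathsf{M}(H)$ behave as you describe, the former reducing to Conjecture~\ref{3 subs} plus Lemma~\ref{autos}, the latter, via Lemma~\ref{interval}, to showing that $\mathsf{S}$ is a partition supercharacter theory --- which is Proposition~\ref{conj prop} read backwards.

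The gap is that every branch of your argument terminates in an open statement rather than a proof. In the case $\mathsf{S}_H\ne\mathsf{M}(H)$ you invoke Conjecture~\ref{3 subs}, which the paper has verified only for $p\le 47$ and which itself rests on the open Conjecture~\ref{reduction}. In the case $\mathsf{S}_H=\mathsf{M}(H)$ the needed fact ($\inner{g}\setminus\{1\}\subseteq\cl_{\mathsf{S}}(g)$ for $g$ outside the $\mathsf{S}$-normal subgroups) is not even covered by Conjecture~\ref{reduction} as stated, since that conjecture seeds the meet algorithm with the classes $[H_i]_{r^2}$ and is tailored to the non-coarsest case; the paper treats the coarsest case by the separate Steps 1--3 procedure of Section 7, using the divisibility of $\norm{\cl_{\mathsf{S}}(g)}$ by $p-1$, and again only computationally for $p\in\{7,11\}$. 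Thus the single substantive assertion --- that $g^m\in\cl_{\mathsf{S}}(g)$ for $g$ lying in no $\mathsf{S}$-normal subgroup --- is exactly what remains unproven after your reductions. A complete proof would require controlling the structure constants $(K,L)_g$ in the iterated refinement $\mathscr{K}^\infty(\mathcal{C})$ uniformly in $p$, which neither your proposal nor the paper accomplishes.
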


\begin{prop}\label{conj prop}
	Assume Conjecture~$\ref{coarse theories}$ holds. Let $\mathsf{S}$ be a supercharacter theory of $G$ with at least three nontrivial, proper $\mathsf{S}$-normal subgroups. Suppose that $\mathsf{S}_H=\mathsf{M}(H)$ for every $\mathsf{S}$-normal subgroup $H$ of $G$. Then $\mathsf{S}$ is a partition supercharacter theory.
\end{prop}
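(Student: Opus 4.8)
The plan is to deduce the finest partition supercharacter theory sits below $\mathsf{S}$ and then invoke Lemma~\ref{interval}. First I would fix a nontrivial, proper $\mathsf{S}$-normal subgroup $H$ (one exists since there are at least three) and write $\mathsf{S}_H=[H]_m$ as permitted by the conventions set up just before Lemma~\ref{coprime 2}. The hypothesis $\mathsf{S}_H=\mathsf{M}(H)$ says the unique nonidentity $\mathsf{S}_H$-class is all of $H\setminus\{1\}$, which has size $p-1$. Since that class equals $[g]_m$ for $1\ne g\in H$ and $\norm{[g]_m}=\norm{m}_p$, I conclude $\norm{m}_p=p-1$, i.e. $m$ is a primitive root modulo $p$.

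Next, because $G$ has at least three nontrivial, proper $\mathsf{S}$-normal subgroups, Conjecture~\ref{coarse theories} applies and yields $\mathsf{T}\preccurlyeq\mathsf{S}$, where $\mathsf{T}$ is the supercharacter theory of $G$ coming from the automorphism $\beta\colon g\mapsto g^m$ (this is an automorphism of $G$ since $\gcd(m,p)=1$). The key step is to identify $\mathsf{T}$ explicitly. For $1\ne g\in G$, the $\mathsf{T}$-class of $g$ is the $\inner{\beta}$-orbit $[g]_m=\{g^{m^i}:i\ge 0\}$; since $m$ is a primitive root, $\{m^i\bmod p:i\ge 0\}=\{1,2,\dotsc,p-1\}$, so this orbit equals $\{g,g^2,\dotsc,g^{p-1}\}=\inner{g}\setminus\{1\}$. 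Hence the $\mathsf{T}$-classes are exactly $\{1\}$ together with $H_i\setminus\{1\}$ for the $p+1$ subgroups $H_1,\dotsc,H_{p+1}$ of order $p$. In the language of Lemma~\ref{partitionsct}, this means $\mathsf{T}=\mathsf{S}_{\mathcal{P}}$ is precisely the finest partition supercharacter theory, where $\mathcal{P}$ is the all-singletons partition of Lemma~\ref{interval}.

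Combining the two steps gives $\mathsf{S}_{\mathcal{P}}=\mathsf{T}\preccurlyeq\mathsf{S}$, and Lemma~\ref{interval} then immediately yields that $\mathsf{S}$ is a partition supercharacter theory, as claimed.

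The only substantive ingredient beyond bookkeeping is the orbit computation of the second paragraph, and it is elementary: viewing $G$ as an $\mathbb{F}_p$-vector space, scalar multiplication by a primitive root fixes every line (cyclic subgroup) setwise and permutes its $p-1$ nonzero vectors in a single cycle. I therefore expect no real obstacle once Conjecture~\ref{coarse theories} is granted; indeed, all of the genuine difficulty of the proposition has been packaged into that conjecture, so the remaining argument is short and formal.
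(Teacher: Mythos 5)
Your proposal is correct and follows essentially the same route as the paper's own proof: observe that $\mathsf{S}_H=\mathsf{M}(H)$ forces the exponent to be a primitive root modulo $p$, invoke Conjecture~\ref{coarse theories} to get $\mathsf{T}\preccurlyeq\mathsf{S}$, identify $\mathsf{T}$ with the all-singletons partition supercharacter theory, and finish with Lemma~\ref{interval}. Your explicit orbit computation is a slightly more detailed justification of a step the paper states without proof, but the argument is the same.
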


\begin{proof}
	Let $H_1,H_2,\dotsc, H_n$ be the $\mathsf{S}$-normal subgroups of $G$ of order $p$. Let $g\in G\setminus\bigcup_iH_i$. Let $r$ be a primitive root modulo $p$ and note that $\mathsf{S}_{H_i}$ comes from the automorphism sending an element to its $r^{\rm th}$ power for each $1\le i\le n$. By hypothesis Conjecture~\ref{coarse theories} holds, so $\mathsf{T}\preccurlyeq\mathsf{S}$ where $\mathsf{T}$ is the supercharacter theory of $G$ coming from the automorphism sending an element to its $r^{\rm th}$ power. Observe that $\mathsf{T}$ is the partition supercharacter theory $\mathsf{S}_{\mathcal{P}}$ corresponding to the partition $\mathcal{P}$ consisting of all singletons. The result now follows from Lemma~\ref{interval}.
\end{proof}

\section{Computations for small primes}

In this section, we consider the cases $p=2,3,5,7$ and $11$ in some depth. For $p=2,3$ and $5$, we can fully classify the supercharacter theories of $C_p\times C_p$. We are able to fully classify the supercharacter theories of $C_7\times C_7$ that have at least three nontrivial, proper supernormal subgroups. This allows us to verify Conjecture~\ref{coarse theories} for the case $p=7$. 

As mentioned earlier, we have verified Conjecture~\ref{3 subs} for all primes $p\le 47$. So we can classify the supercharacter theories of $C_{11}\times C_{11}$ that have at least three nontrivial, proper supernormal subgroups and for which the restriction to one of them is not the coarsest theory. We do a little more for $p=11$. We classify all supercharacter theories of $C_{11}\times C_{11}$ that have at least nine nontrivial, proper supernormal subgroups.

We will say that a supercharacter theory $\mathsf{S}$ has type $T_n$ if $G$ has exactly $n$ nontrivial, proper $\mathsf{S}$-normal subgroups. In each of the cases just mentioned, we list the total number of supercharacter theories of type $T_n$. For each $n$, we divide the supercharacter theories of type $T_n$ into three major types: (1) Those that can be realized as a nontrivial $\ast$ or direct product; (2) Those coming from automorphisms (automorphic supercharacter theories); (3) Those coming from partitions. We give exact counts for each type as well.  

Before discussing the computational aspect, let us find the number of $\ast$ and direct products. Since every supercharacter theory of $C_p$ comes from automorphisms, and two distinct subgroups of $\mathrm{Aut}(C_p)$ produce two distinct supercharacter theories, we find that the number of distinct supercharacter theories of $C_p$ is $\tau(p-1)$, where $\tau(n)$ is the number of divisors of the integer $n$. A $\ast$-product of $G$ is determined by three pieces of information: A nontrivial, proper (normal) subgroup $N$, a supercharacter theory of $N$, and a supercharacter theory of $G/N$. Since $N\cong G/N$, it follows that  there are $(p+1)\tau(p-1)^2$ supercharacter theories of $G=C_p\times C_p$ that can be realized as a nontrivial $\ast$-products. Note that $\mathsf{m}(G)=\mathrm{m}(H)\times\mathsf{m}(N)$ for any choice of two distinct nontrivial, proper subgroups $H$ and $N$. Thus there are $1+\binom{p+1}{2}(\tau(p-1)^2-1)$ supercharacter theories of $G=C_p\times C_p$ that can be realized as nontrivial direct products, where we are including $\mathsf{m}(G)$.

To compute the supercharacter theories of $G$ coming from automorphisms, we constructed the natural action of $\mathrm{Aut}(G)$ on $\irr(G)$, which gave us a faithful representation of $\mathrm{Aut}(G)$ into $\mathrm{Sym}(\irr(G))$. Using this permutation representation, we used MAGMA's \verb+Subgroups+ function to find representatives of the conjugacy classes of subgroups of $\mathrm{Aut}(G)$. Finally, we expanded the classes to find all of the subgroups of $\mathrm{Aut}(G)$ and constructed the orbits of these subgroups on $\irr(G)$.

Computing the total number of partition supercharacter theories of each type is an easy combinatorics exercise. The number of supernormal subgroups of $\mathsf{S}_{\mathcal{P}}$ is the multiplicity of 1 in the partition $\mathcal{P}$. Suppose $\mathcal{P}$ has shape $1^{n_1}+2^{n_2}+\dotsb+(p+1)^{n_{p+1}}$. Let $m_0=0$ and define $m_i=m_{i-1}+in_i$ for $i\ge 1$. Let 
\[\mathcal{N}_i= \frac{\binom{p+1-m_{i-1}}{i}\binom{p+1-m_{i-1}-n_i}{i}\dotsb\binom{p+1-m_{i-1}-i(n_i-1)}{i}}{n_i!}.\]
Then the number of partition supercharacter theories with shape $\mathcal{P}$ is $\prod_{i=1}^{p+1}\mathcal{N}_i$.

We remind the reader that all of the product supercharacter theories are automorphic supercharacter theories.  Hence, those two entries are equal for a given group if and only if all of the automorphic supercharacter theories are product supercharacter theories.  Also, it is possible for a supercharacter theory to be both an automorphic supercharacter theory and a partition supercharacter theory.  The overlap between these two sets explains why the total number of supercharacter theories is less that then the sum of the number of partition supercharacter theories and automorphic supercharacter theories.

If $p=2$, then one can easily compute $\mathrm{SCT}(G)$ by hand. In fact, it would not be terribly difficult to do so for $p=3$. However, we used Hendrickson's algorithm given in \cite{AH08} to compute $\mathrm{SCT}(G)$ for $p=3$ and $p=5$. This is unfortunately computationally infeasible for $p>5$.

Now let $p=7$ or $p=11$. For $i$ sufficiently large, we were able to compute the total number of supercharacter theories of type $T_i$. To accomplish this, we showed that every supercharacter theory of type $T_i$ either came from automorphisms or partitions. To accomplish this, we used the following sequence of steps.

\noindent{\bf Step 1:} Suppose that $\mathsf{S}$ is a supercharacter theory of $G=C_p\times C_p$ of type $T_i$ for $i\ge 3$ and $p\in\{7,11\}$. Let $H$ be an $\mathsf{S}$-normal subgroup of $G$. Since we have checked Conjecture 6.4 for $G$, we know that $\mathsf{S}_H=\mathsf{M}(H)$. Since $G$ has at least three nontrivial, proper $\mathsf{S}$-normal subgroups, we also know that $\mathsf{S}_{G/H}=\mathsf{M}(G/H)$. Let $g$ be an element of $G$ that does not lie in an $\mathsf{S}$-normal subgroup of $G$. Since $\norm{\cl_{\mathsf{S}_{G/H}}(g)}=p-1$ and $\norm{\cl_{\mathsf{S}_{G/H}}(g)}$ divides $\norm{\cl_{\mathsf{S}}(g)}$, we know that $\norm{\cl_{\mathsf{S}}(g)}$ is divisible by $p-1$.

\noindent{\bf Step 2:} Let $H_1,\dotsc,H_n$ be the set of all nontrivial, proper $\mathsf{S}$-normal subgroups of $G$. Let $L=G\setminus\bigcup_{i=1}^nH_i$. Let $\mathcal{G}$ be the subset of all $P\subseteq L$ for which $\norm{P}$ is divisible by $p-1$ and also satisfy $\{P\}\in\mathcal{K}^\infty(\{P\})$ (Hendrickson calls subsets satisfying this latter condition {\it good} subsets).

\noindent{\bf Step 3:} For each $P\in\mathcal{G}$, determine if $\mathcal{C}_P\subseteq \mathscr{K}^\infty(\mathcal{C}_P)$, where $\mathcal{C}_P=\{\{1\},H_1\setminus\{1\},H_2\setminus\{1\},\dotsc,H_n\setminus\{1\},P\}$. If $P$ does not satisfy this condition, then $P$ is not an $\mathsf{S}$-class. After completing each of the above steps, the only subsets $P$ that remained had the form $P=\bigcup_{x\in J}(\inner{x}-1)$ for some subset $J\subseteq L$. In particular, $\mathsf{S}$ must be a partition supercharacter theory by Proposition~\ref{conj prop}.

The table summarizes our computational results. We indicate the information that is unavailable with a dash. The only such information is the total number of supercharacter theories of $G=C_7\times C_7$ of type $T_i$ for $0\le i\le 2$ and the total number of supercharacter theories of $G=C_{11}\times C_{11}$ of type $T_i$ for $0\le i\le 8$. 

\begin{center}
\begin{tabular}{|c|c|c|c|c|c|}
	\hline
		Prime&Type&Product&Automorphic&Partition&Total\\
	\hline\hline
		\multirow{3}{*}{$2$}&$T_0$&0&1&1&1 \\
	\cline{2-6}
		&$T_1$&3&3&3&3 \\
	\cline{2-6}
		&$T_3$&1&1&1&1 \\
	\hline\hline
		\multirow{4}{*}{$3$}&$T_0$&0&4&4&4 \\
	\cline{2-6}
		&$T_1$&16&16&4&16 \\
	\cline{2-6}
		&$T_2$&18&18&6&18 \\
	\cline{2-6}
		&$T_4$&1&2&1&2 \\
%
	\hline\hline
		\multirow{6}{*}{$5$}&$T_0$&0&96&41&96 \\
	\cline{2-6}
		&$T_1$&54&54&66&114 \\
	\cline{2-6}
		&$T_2$&120&180&60&210 \\
	\cline{2-6}
		&$T_3$&0&0&20&20 \\
	\cline{2-6}
		&$T_4$&0&0&15&15 \\
	\cline{2-6}
		&$T_6$&1&3&1&3 \\
	\hline\hline
		\multirow{8}{*}{$7$}&$T_0$&0&470&715&- \\
	\cline{2-6}
		&$T_1$&128&128&1296&- \\
	\cline{2-6}
		&$T_2$&420&728&1148&- \\
	\cline{2-6}
		&$T_3$&0&0&616&616 \\
	\cline{2-6}
		&$T_4$&0&0&280&280 \\
	\cline{2-6}
		&$T_5$&0&0&56&56 \\
	\cline{2-6}
		&$T_6$&0&0&28&28 \\
	\cline{2-6}
		&$T_8$&1&4&1&4 \\
	\hline\hline
		\multirow{12}{*}{$11$}&$T_0$&0&2839&580317&- \\
	\cline{2-6}
		&$T_1$&192&192&1179036&- \\
	\cline{2-6}
		&$T_2$&990&2376&1169652&- \\
	\cline{2-6}
		&$T_3$&0&0&753500&- \\
	\cline{2-6}
		&$T_4$&0&0&353925&- \\
	\cline{2-6}
		&$T_5$&0&0&128304&- \\
	\cline{2-6}
		&$T_6$&0&0&37884&- \\
	\cline{2-6}
		&$T_7$&0&0&8712&- \\
	\cline{2-6}
		&$T_8$&0&0&1980&- \\		
	\cline{2-6}
		&$T_9$&0&0&220&220 \\		
	\cline{2-6}
		&$T_{10}$&0&0&66&66 \\		
	\cline{2-6}
		&$T_{12}$&1&4&1&4 \\		
	\hline
\end{tabular}
\end{center}

Given all of the evidence presented thus far, it is feasible that one may be able to classify the supercharacter theories of $C_p\times C_p$ that have at least one nontrivial, proper $\mathsf{S}$-normal subgroup (a supercharacter theory with no nontrivial, proper $\mathsf{S}$-normal subgroup has been referred to as {\it simple}). The others may prove much more difficult to describe. For example, if $N\lhd_{\mathsf{S}}G$, then $\mathsf{S}\preccurlyeq\mathsf{S}_N\ast\mathsf{S}_{G/N}$, which puts limitations on which elements can lie in the same $\mathsf{S}$-class. Also $\norm{\cl_{\mathsf{S}_{G/N}}(gN)}$ must divide $\norm{\cl_{\mathsf{S}}(g)}$ for every $g\in G\setminus N$, which puts arithmetic restrictions on the possible sizes of $\mathsf{S}$-classes. When $\mathsf{S}$ is simple though, none of these restrictions apply. However, given all that we have observed, it is possible that a classification may only involve the partition supercharacter theories and those coming from automorphisms.

%


\end{document}